\definecolor{darkred}{RGB}{100,0,0}
\definecolor{darkgreen}{RGB}{0,100,0}
\definecolor{darkblue}{RGB}{0,0,150}
\newtheorem{thm}{Theorem}
\newtheorem{prp}{Proposition}
\newtheorem{lem}{Lemma}
\newtheorem{cor}{Corollary}
\theoremstyle{remark}
\newtheorem{rem}{Remark}
\def\beq{\begin{equation}} 
\def\eeq{\end{equation}}
\def\beqn{\begin{eqnarray*}}
\def\eeqn{\end{eqnarray*}}
\def\Bitem{\begin{itemize}\setlength{\itemsep}{.2in}}
\def\bitem{\begin{itemize}\setlength{\itemsep}{.05in}}
\def\eitem{\end{itemize}}
\def\Benum{\begin{enumerate}\setlength{\itemsep}{.2in}}
\def\benum{\begin{enumerate}\setlength{\itemsep}{.05in}}
\def\eenum{\end{enumerate}}
\def\bmult{\begin{multline*}}
\def\emult{\end{multline*}}
\def\bcenter{\begin{center}}
\def\ecenter{\end{center}}
\def\bframe{\begin{frame}}
\def\eframe{\end{frame}}
\newcommand{\thmref}[1]{Theorem~\ref{thm:#1}}
\newcommand{\prpref}[1]{Proposition~\ref{prp:#1}}
\newcommand{\corref}[1]{Corollary~\ref{cor:#1}}
\newcommand{\lemref}[1]{Lemma~\ref{lem:#1}}
\newcommand{\secref}[1]{Section~\ref{sec:#1}}
\newcommand{\figref}[1]{Figure~\ref{fig:#1}}
\newcommand{\lineref}[1]{line~\ref{line:#1}}
\DeclareMathOperator{\tr}{tr}
\def\cB{\mathcal{B}}
\def\cC{\mathcal{C}}
\def\cF{\mathcal{F}}
\def\cN{\mathcal{N}}
\def\cX{\mathcal{X}}
\def\bbC{\mathbb{C}}
\def\bbN{\mathbb{N}}
\def\bbP{\mathbb{P}}
\def\bbQ{\mathbb{Q}}
\def\bbR{\mathbb{R}}
\def\bbS{\mathbb{S}}
\renewcommand{\P}{\operatorname{\mathbb{P}}}
\newcommand{\expect}[1]{\mathbb{E}\left[#1\right]}
\newcommand{\cov}[1]{\operatorname{Cov}\left[#1\right]}
\def\weak{\rightharpoonup}
\renewcommand{\>}{\rangle}
\def\eps{\varepsilon}
\def\1{\mathbbm{1}}
\def\({\left(}
\def\){\right)}
\newcommand{\nt}{\lfloor nt \rfloor}
\newcommand{\ns}{\lfloor ns \rfloor}
\DeclareMathOperator{\tv}{TV}
\DeclareMathOperator{\Lip}{Lip}
\DeclareMathOperator{\sinc}{sinc}
\DeclareMathOperator{\BL}{BL}
\DeclareMathOperator{\Id}{Id}
\definecolor{purple}{rgb}{0.4,.1,.9}
\begin{document}
\thispagestyle{empty}

\title{Some Random Paths with Angle Constraints}
\author{
Cl\'ement Berenfeld\footnote{ \ \'Ecole Normale Sup\'erieure, Paris}
\and
Ery Arias-Castro\footnote{ \ University of California, San Diego}
}
\date{}
\maketitle

\begin{abstract}
We propose a simple, geometrically-motivated construction of smooth random paths in the plane.  The construction is such that, with probability one, the paths have finite curvature everywhere (and the realizations are visually pleasing when simulated on a computer).  Our construction is Markov of order~2.  We show that a simpler construction which is Markov of order~1 fails to exhibit the desired finite curvature property.
\end{abstract}


\section{Introduction} \label{sec:intro}
A random walk with independent increments having finite variance converges, when linearly interpolated, to a Brownian motion.  This is the essence of the celebrated \cite{Don51} theorem, and applies in any (finite) dimension.  In fact, historically, Robert Brown's observations were of pollen particules moving in a solution, therefore in dimension two or three.  

As is well-known, a Brownian motion is differentiable nowhere with probability one, and may be therefore inappropriate to model motion that is smoother.  In the present paper, we are concerned with constructing a stochastic process in the plane that yields curves which have finite curvature almost surely.  
There are various relatively obvious constructions of such processes that fit the bill, such as integrating a Brownian motion twice (\figref{BM}), or interpolating a random sample of points using some splines such as cubic ones or GAM models (\figref{splines}).
In the first case, the realizations are less than pleasant in that they do not seem to curve much at all.
In the later case, the construction is not particularly geometric in nature.


\begin{figure}[ht]
\centering
\includegraphics[scale=0.4]{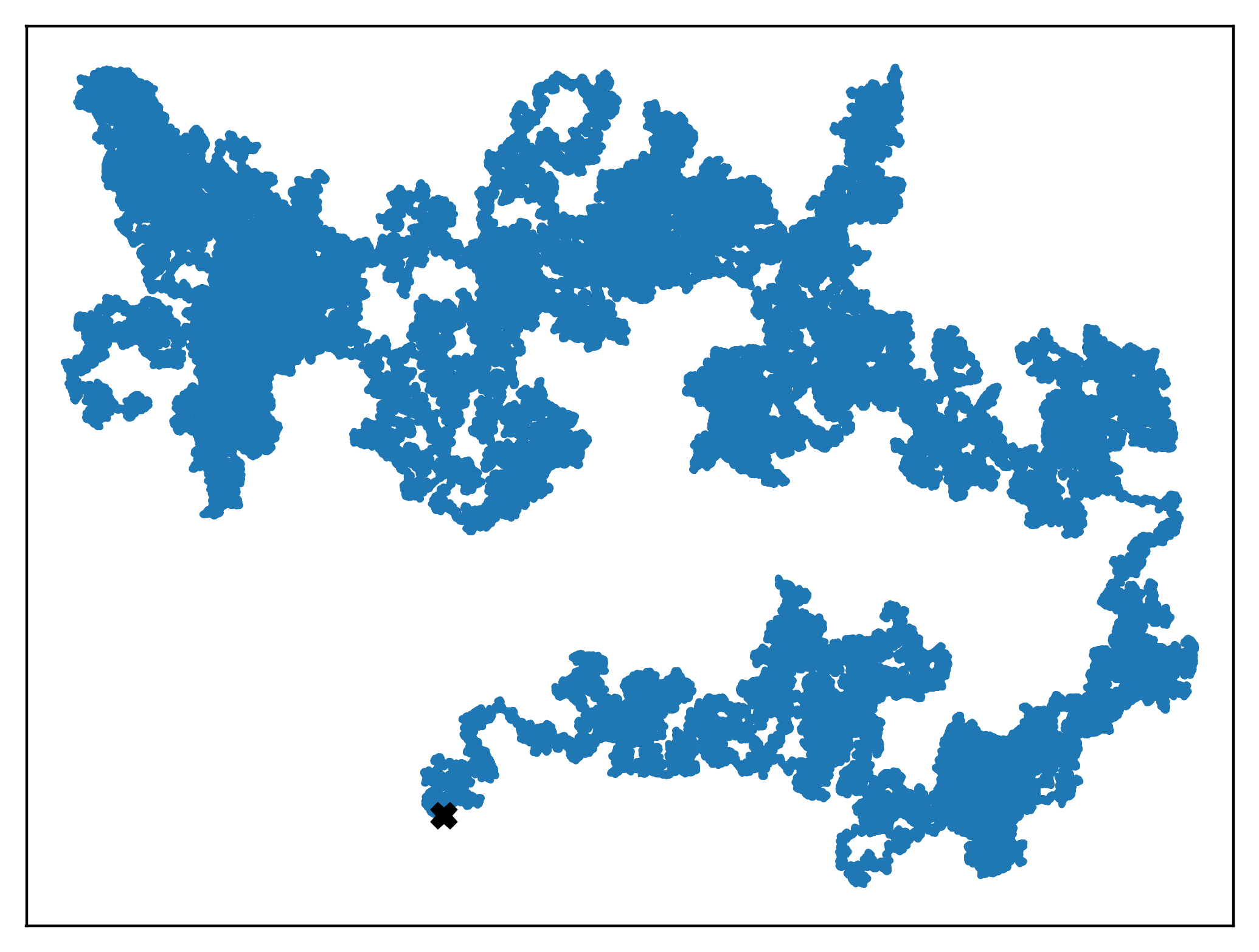}
\includegraphics[scale=0.4]{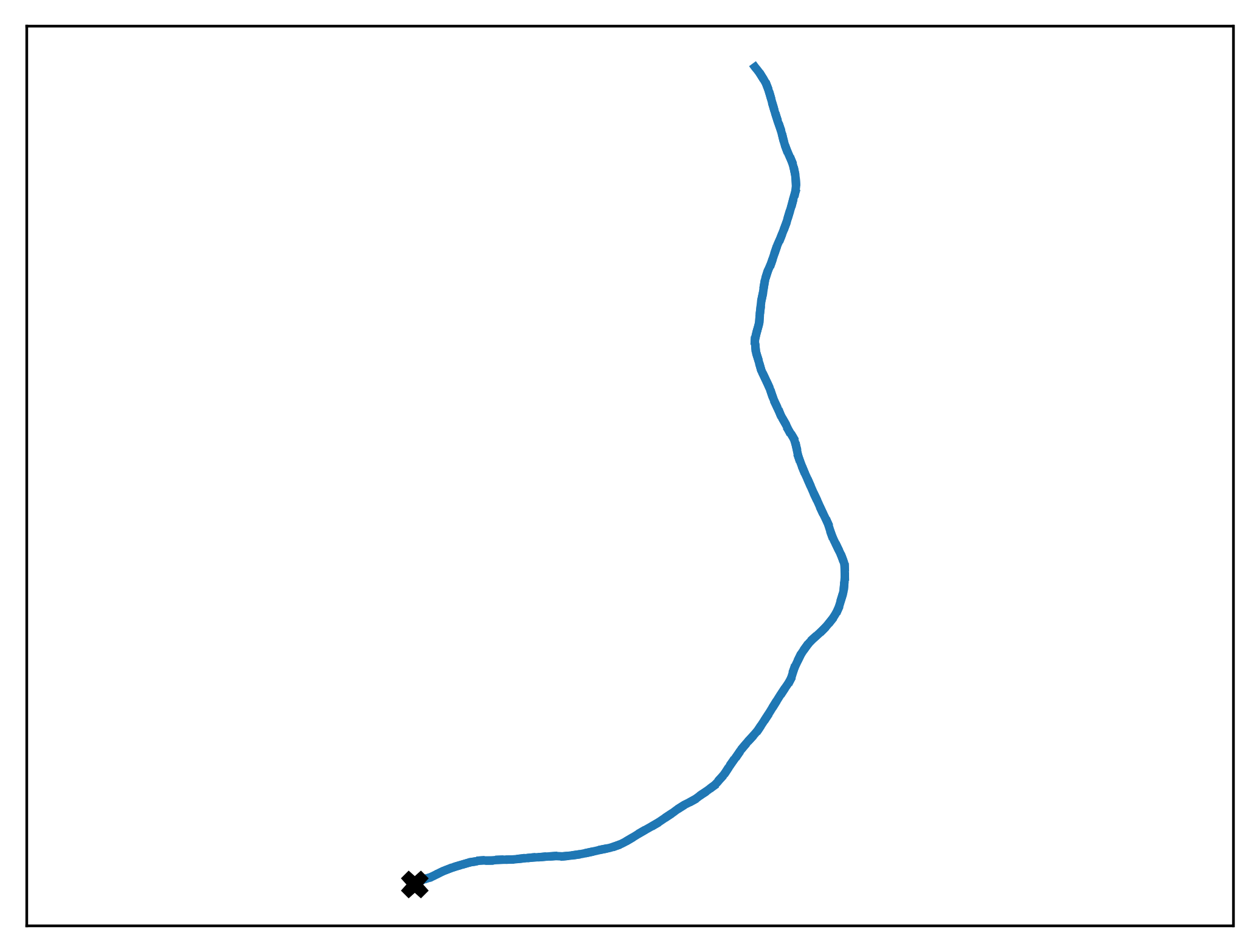}
\includegraphics[scale=0.4]{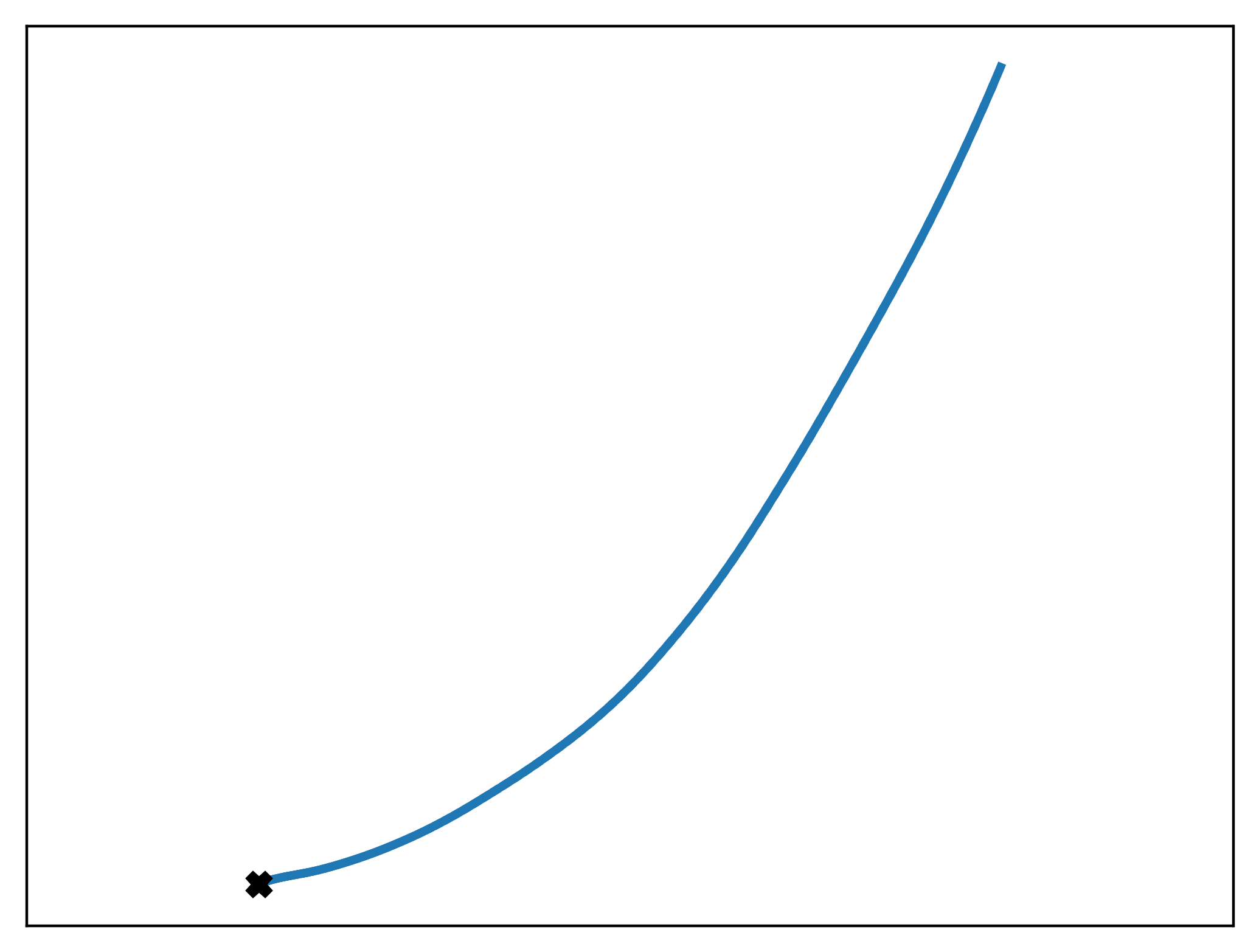}
\caption{\small A realization of a Brownian motion (left), which is then integrated once (center) and twice (right).}
\label{fig:BM}
\end{figure}

\begin{figure}[ht]
\centering
\includegraphics[scale=0.4]{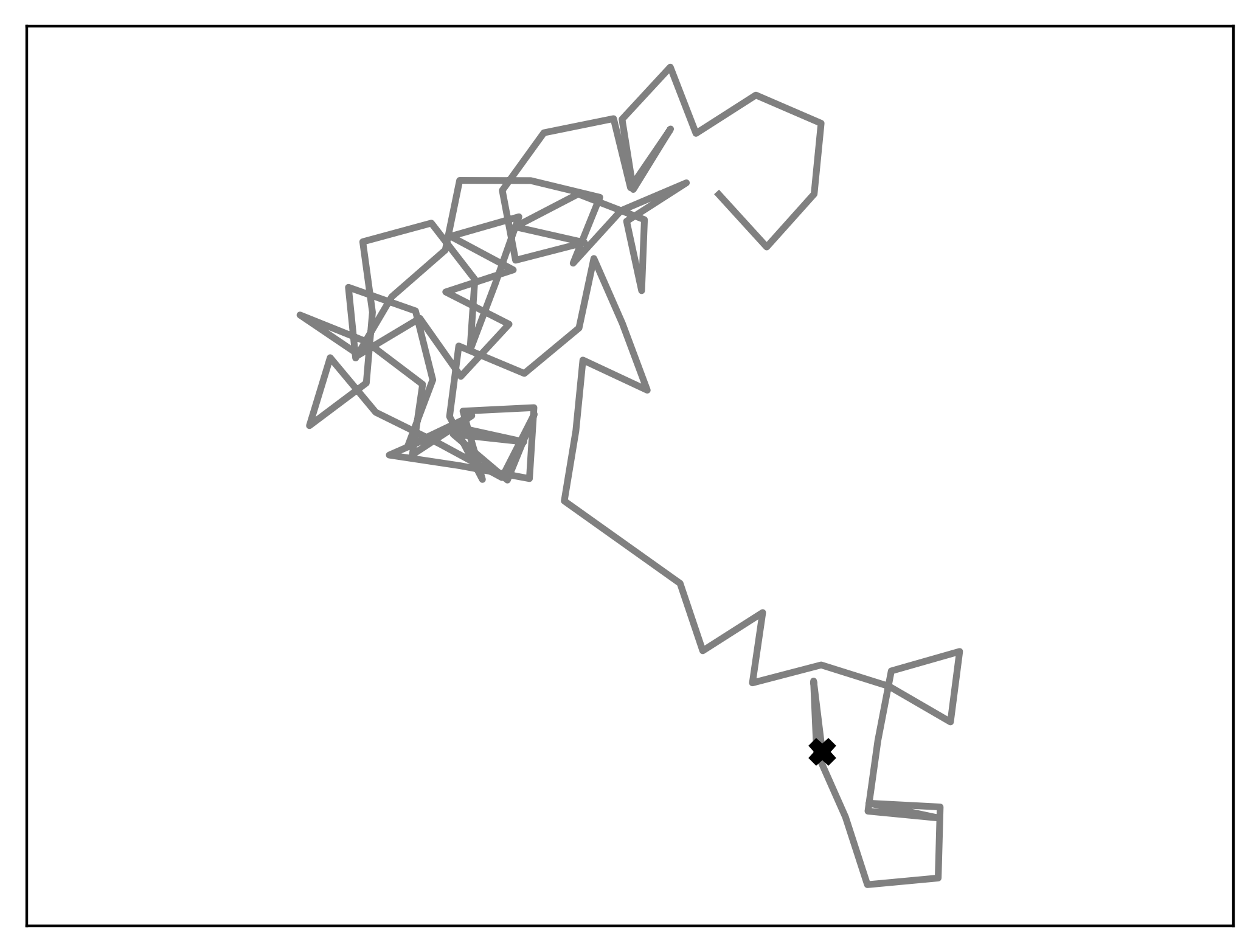}
\includegraphics[scale=0.4]{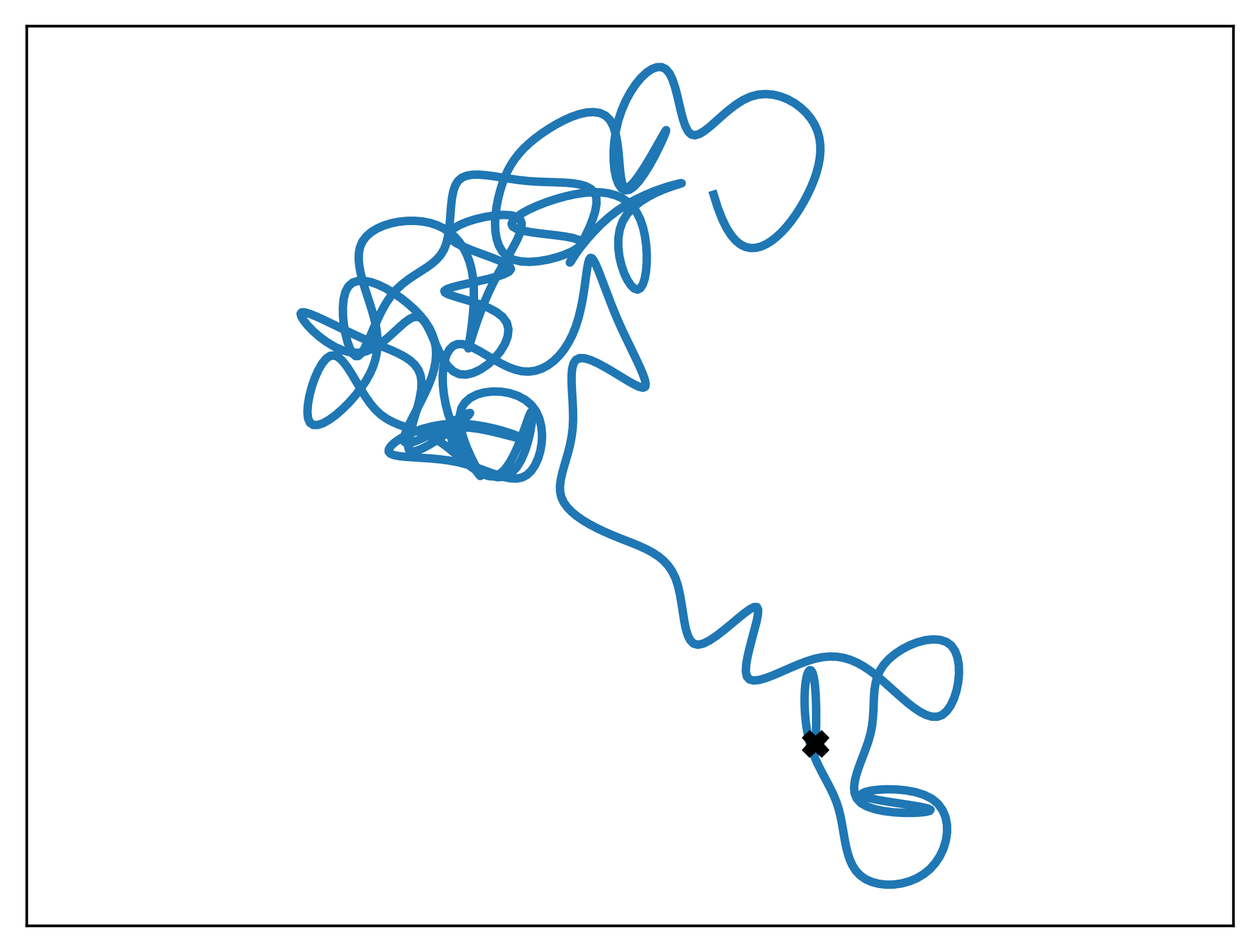}
\includegraphics[scale=0.4]{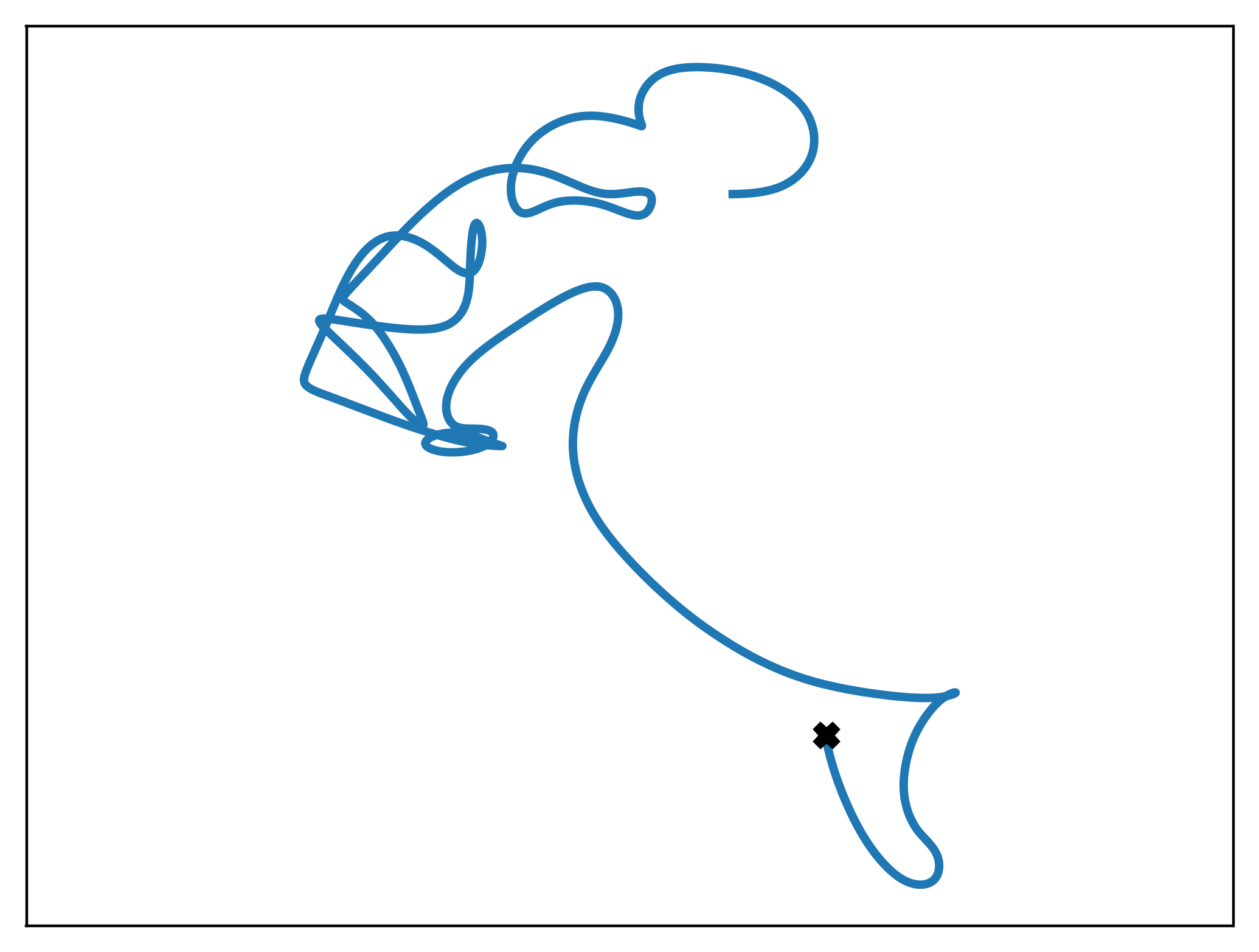}
\caption{\small Two realizations of smooth random processes using cubic splines interpolation (middle) and GAM model regression (right) applied to a discrete random walk (left).}
\label{fig:splines}
\end{figure}

We propose a construction based on a random walk with nontrivial memory.  Indeed, a random walk with no memory would again converge to a Brownian motion.  

Our first attempt leads us to constraint the angle between two successive line segments in the polygonal line resulting from interpolating the random walk.  In our construction, the line segments are all of unit length and the angles are drawn independently and uniformly at random in some interval --- see \eqref{U} and \eqref{X} for a formal definition.  In turns out that this construction fails in producing a smooth curve in the limit: when the angle interval remains constant, the process converges again to a Brownian motion (\thmref{const}); when the angle interval has length tending to zero asymptotically, the smoothest limiting process we are able to obtain is only once differentiable (\thmref{c1}).
Our second attempt is based on endowing the sequence of random angles in the construction with some memory.  It so happens that a minimum amount of memory suffices for the construction to be successful (\thmref{c2}).
A realization of this process is given in \figref{c2}.

\begin{figure}[ht]
\centering
\includegraphics[scale=0.4]{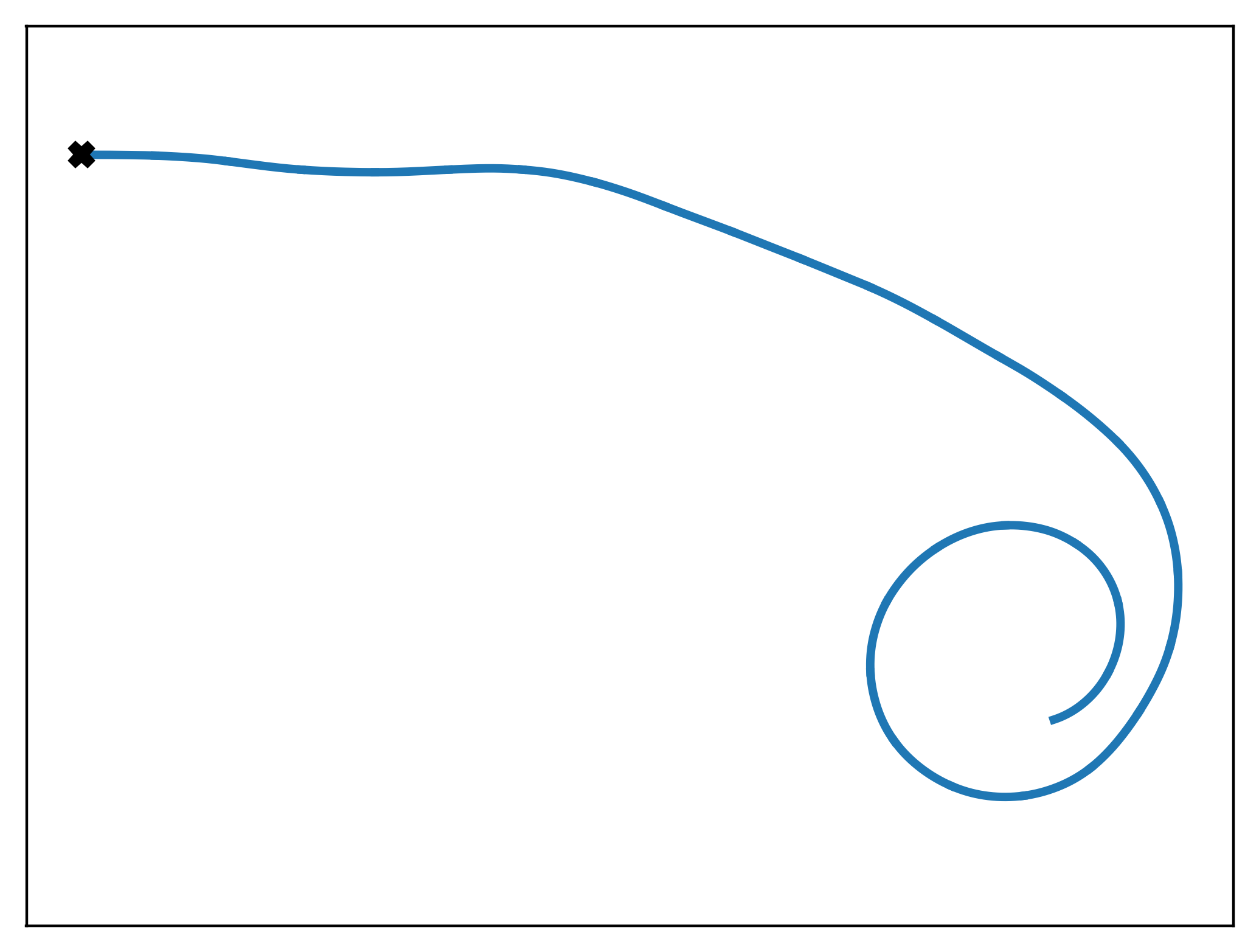}
\includegraphics[scale=0.4]{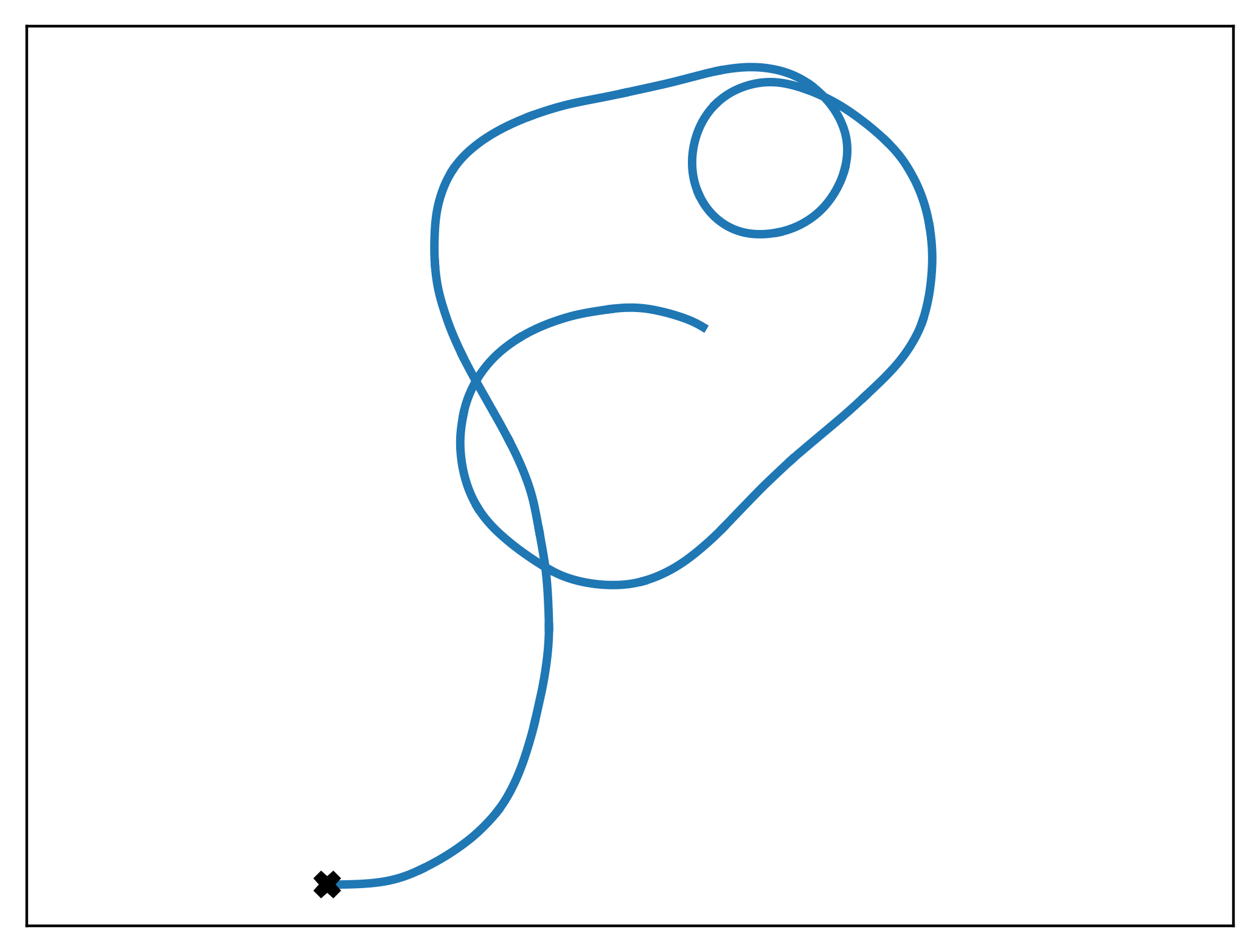}
\includegraphics[scale=0.4]{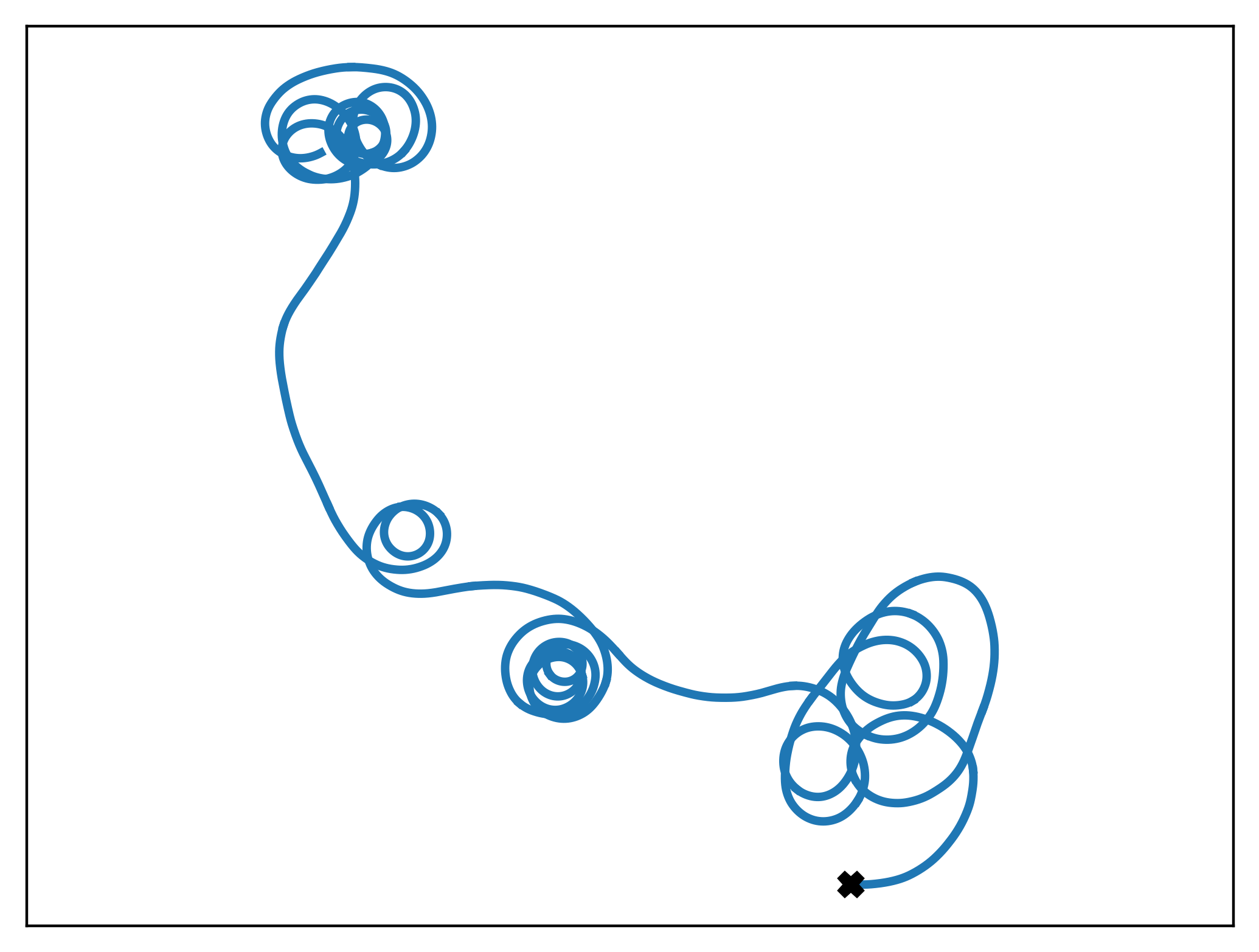}
\caption{\small A realization of the process defined in \secref{markov} for different values of the parameter defining it. Specifically, with the notation to be defined shortly, $n^{3/2} \alpha_n$ was taken to be $4$ (left), $16$ (middle), and $128$ (right).} 
\label{fig:c2}
\end{figure}

\paragraph{Content}
The remainder of the article is organized as follows.
In \secref{const}, we define and study a random walk where the successive angles are drawn iid from the uniform distribution on an fixed interval.  We show that this construction results in a Brownian motion when taken to the limit (\thmref{const}).
In \secref{var}, we consider the same construction except that the interval from which the angles are sampled shrinks in size in the limit.  We show that this construction results in either trivial limits (\prpref{easy}), in a Brownian motion (\thmref{brown}), or in a process whose realizations have infinite pointwise curvature everywhere with probability one (\thmref{c1}).  
In \secref{markov}, we consider again the same basic construction, except that the angles are generated by a Markov process, and show that the limit is a process whose realizations have finite curvature everywhere with probability one (\thmref{c2}).
We end with a short discussion in \secref{discussion}.

\section{Construction based on an iid sequence of angles} 
\label{sec:const}

We consider a sequence of iid random variables $\{\Theta_i\}_{i \geq 2}$ with values in $\bbR$, which we use to define the following process: Starting with $U_1$ drawn uniformly at random from $\bbS^1$, recursively define 
\beq\label{U}
U_{j} = e^{i\Theta_j} U_{j-1}, \quad \text{for } j\geq 2.
\eeq
Note that $U_1, U_2, \dots$ are uniformly distributed on the unit circle, but not independent in general.
Denote $\cF_j$ the $\sigma$-field generated by $\{\Theta_k\}_{2\leq k\leq j}$ and $U_1$, so that $U_j$ is $\cF_j$-measurable for all $j$. We investigate the behavior of the piecewise-linear interpolation of this walk, namely
\beq\label{X}
	X^n_t = \sum_{j=1}^{\nt} U_j + (nt - \nt) U_{\nt+1}, \quad \text{for } t \in [0,1].
\eeq
See \figref{walk} for an illustration of this definition. We see $X^n$ as a random variable taking its value in $\cC_2 = C([0,1], \bbR^2)$, the set of continuous functions from $[0,1]$ to $\bbR^2$, endowed with the $\sigma$-field associated with the uniform topology. 

\begin{figure}[ht]
\centering
\includegraphics[scale=0.14]{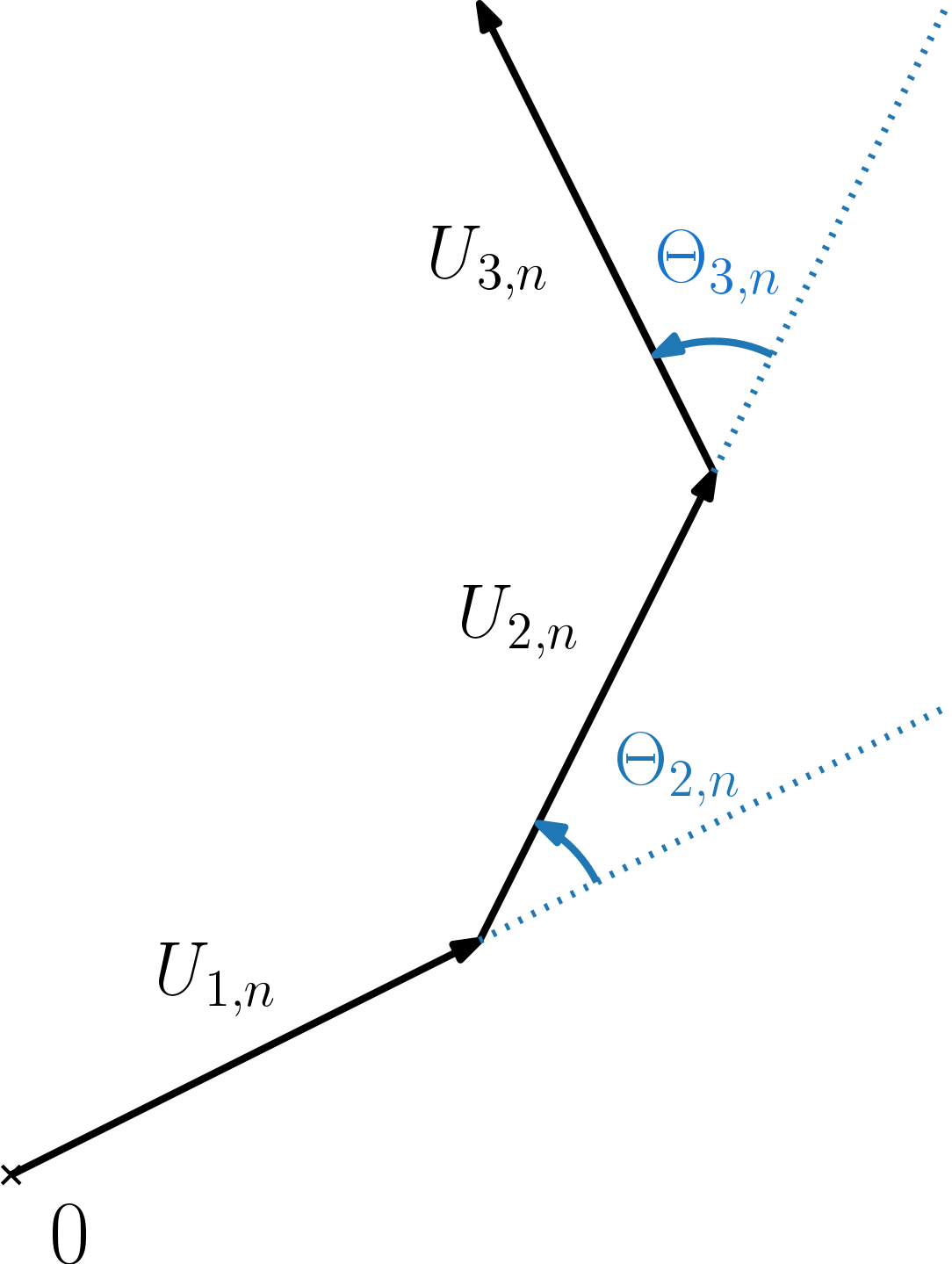}
\caption{\small The first steps of the random walk defined by  \eqref{U}-\eqref{X} and its linear interpolation.} 
\label{fig:walk}
\end{figure}

\begin{thm} \label{thm:const}
If the random variables $\{\Theta_i\}_{i \ge 1}$ are uniformly distributed in $[-\alpha,\alpha]$, where $\alpha \in (0,\pi]$, then, as $n \to \infty$, 
\begin{align}
	\frac{1}{\sqrt{n}}X^n \weak \sigma_\alpha B^{(2)}, \quad \text{with } \sigma_\alpha^2 = \frac{1}{2}\frac{1+\sinc \alpha}{1 - \sinc \alpha},
\end{align}
where $\weak$ stands for the weak convergence of probability measures, $B^{(2)}$ denotes the standard $2$-dimensional Brownian motion, and $\sinc \alpha = \sin(\alpha)/\alpha$. 
 \end{thm}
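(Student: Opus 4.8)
The plan is to apply Donsker's invariance principle (the functional central limit theorem) to the sequence of partial sums $S_j = \sum_{k=1}^j U_k$, after establishing that the increments $\{U_j\}$ form a stationary sequence with enough decay of correlations. The first step is to compute the covariance structure of the $U_j$. Writing $U_j = e^{i(\Theta_2+\cdots+\Theta_j)}U_1$ (with the convention that the empty sum is $0$), and using that the $\Theta_i$ are i.i.d. with $\mathbb{E}[e^{i\Theta}] = \operatorname{sinc}\alpha =: \rho$ (a real number in $(0,1)$ since $\alpha\in(0,\pi]$), I would show that $\mathbb{E}[U_j] = 0$ for all $j$ and, treating $U_j$ as a vector in $\mathbb{R}^2$, that $\mathbb{E}[U_j U_k^\top] = \tfrac12 \rho^{|j-k|} I_2$. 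The point is that $\mathbb{E}[\langle U_j, U_k\rangle] = \operatorname{Re}\,\mathbb{E}[\overline{U_j}U_k] = \rho^{|j-k|}$ and, by rotational invariance of the law of $U_1$, the cross term vanishes and the two coordinates are exchangeable, giving the isotropic form.

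Next I would identify the limiting variance. For a stationary, mean-zero, square-integrable sequence with summable covariances, the normalized partial sums $n^{-1/2}S_{\lfloor nt\rfloor}$ converge weakly in $\mathcal{C}_2$ to $\Sigma^{1/2} B^{(2)}$, where $\Sigma = \sum_{k\in\mathbb{Z}} \mathbb{E}[U_0 U_k^\top]$ is the long-run covariance. Here this sum is $\tfrac12\big(1 + 2\sum_{k\ge 1}\rho^k\big) I_2 = \tfrac12 \cdot \frac{1+\rho}{1-\rho} I_2$, which is exactly $\sigma_\alpha^2 I_2$. So the scalar $\sigma_\alpha^2$ in the statement is precisely this long-run variance per coordinate.

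For the actual weak-convergence argument I would invoke a CLT for dependent sequences; the cleanest route is to note that $\{U_j\}$ is a functional of the i.i.d. sequence $\{\Theta_i\}$ and is strongly mixing with geometrically decaying mixing coefficients — indeed $U_j$ and $U_{j+m}$ are linked only through the rotation $e^{i(\Theta_{j+1}+\cdots+\Theta_{j+m})}$ whose conditional expectation given $\mathcal{F}_j$ is $\rho^m U_j \to 0$ — so a martingale-approximation or mixing CLT (e.g. the functional CLT of \cite{Don51}-type for mixing sequences, or an application of the Cramér–Wold device coordinatewise combined with a standard tightness estimate via a fourth-moment bound on increments) applies. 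The remaining gap between $n^{-1/2}S_{\lfloor nt\rfloor}$ and $n^{-1/2}X^n_t$ is the single interpolation term $(nt-\lfloor nt\rfloor)U_{\lfloor nt\rfloor+1}$, which has norm at most $1$ and so contributes $O(n^{-1/2})$ uniformly in $t$; hence it is asymptotically negligible and does not affect the limit.

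The main obstacle is establishing tightness of $\{n^{-1/2}X^n\}$ in $\mathcal{C}_2$ together with the dependent-data CLT for finite-dimensional distributions in a self-contained way; the correlations decay geometrically, so all the classical hypotheses hold, but making the mixing (or martingale-approximation) argument rigorous — in particular controlling $\mathbb{E}\big[\|S_{j+m}-S_j\|^4\big]$ by $C m^2$ uniformly, which is what one needs for Kolmogorov-type tightness — requires a careful moment computation exploiting the product structure $\mathbb{E}[e^{i(\Theta_{a}+\cdots+\Theta_{b})}] = \rho^{\,b-a+1}$. Everything else (the covariance computation, the identification of $\sigma_\alpha^2$, the negligibility of the interpolation remainder) is routine.
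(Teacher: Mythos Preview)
Your plan is correct and follows essentially the same architecture as the paper's proof: compute the covariance $\cov{U_j,U_{j+k}} = \tfrac12(\sinc\alpha)^k I_2$, identify the long-run variance $\sigma_\alpha^2 = \tfrac12\frac{1+\sinc\alpha}{1-\sinc\alpha}$, establish convergence of finite-dimensional laws via a dependent-data CLT, and obtain tightness from a fourth-moment bound $\expect{\|S_n\|^4} \le C n^2$ together with a Billingsley-type criterion.

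The main difference is in how the dependent CLT is implemented. The paper does not appeal to an off-the-shelf mixing FCLT; instead it carries out a Bernstein big-block/small-block decomposition explicitly (slices $\xi_{j,n}$ of length $p_n$ separated by gaps $q_n$), applies a characteristic-function-based dependent CLT to the blocks, and controls the near-independence of blocks by bounding the total variation between the law of $\sum_{i=1}^r \Theta_i \bmod 2\pi$ and the uniform law via Fourier coefficients (Poisson summation gives $\tv(\nu_r,\nu)\le \sum_{k\ge1}|\sinc(k\alpha)|^r$). This TV bound is the paper's substitute for a mixing-coefficient estimate.

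One point in your sketch deserves care: the sentence ``strongly mixing with geometrically decaying mixing coefficients --- indeed $U_j$ and $U_{j+m}$ are linked only through the rotation $e^{i(\Theta_{j+1}+\cdots+\Theta_{j+m})}$ whose conditional expectation given $\mathcal{F}_j$ is $\rho^m U_j$'' conflates covariance decay with mixing. Decay of conditional expectations gives you $\rho$-mixing of the \emph{linear} kind but not $\alpha$-mixing by itself; what you actually need (and what the paper proves) is that the conditional law of $U_{j+m}$ given $\mathcal{F}_j$ converges to the unconditional (uniform) law in total variation at a geometric rate, and this requires the Fourier/TV argument on the wrapped angle sum. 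Once you supply that, your route and the paper's are the same argument told at different levels of detail.
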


\begin{figure}[ht]
\centering
\includegraphics[scale=0.4]{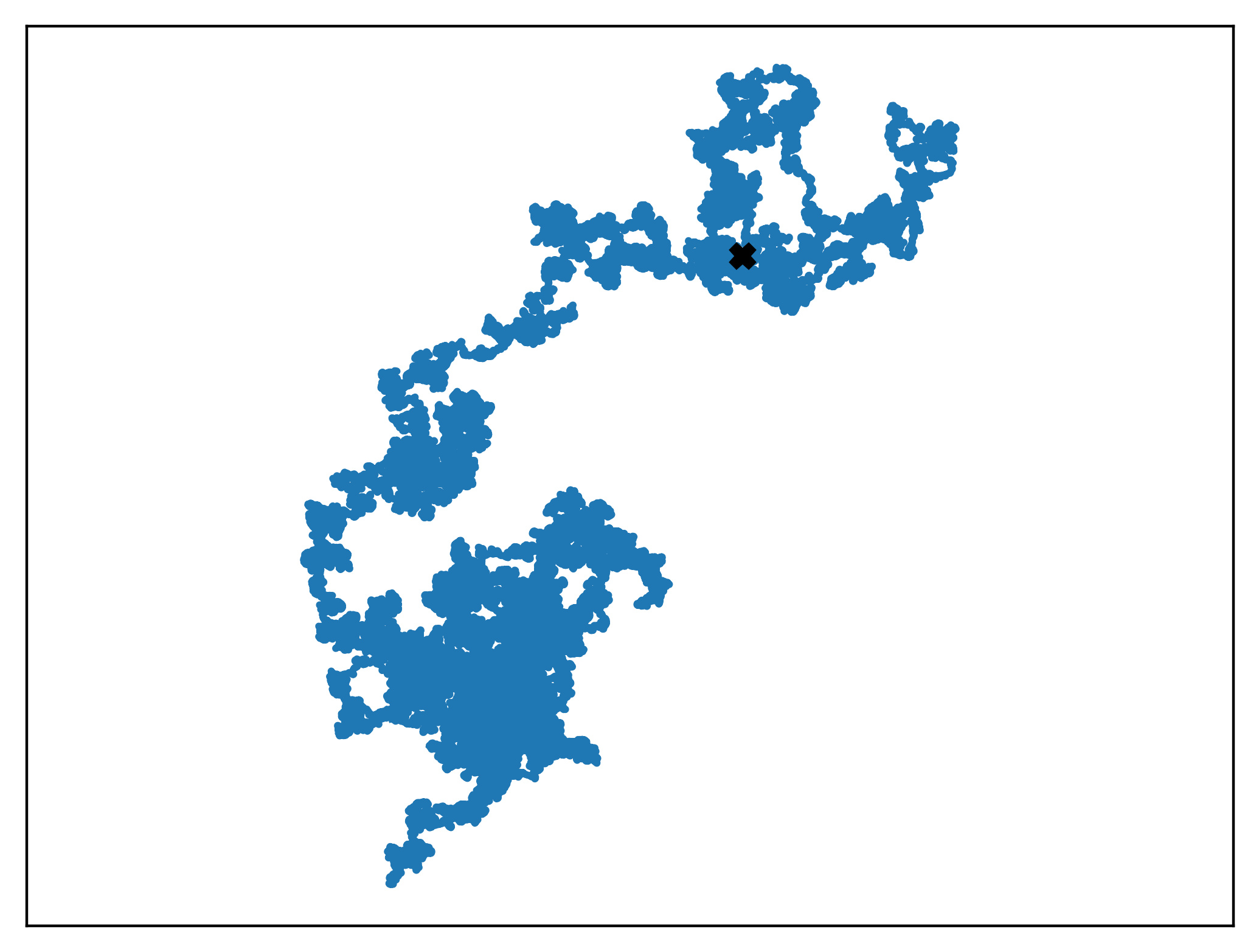}
\includegraphics[scale=0.4]{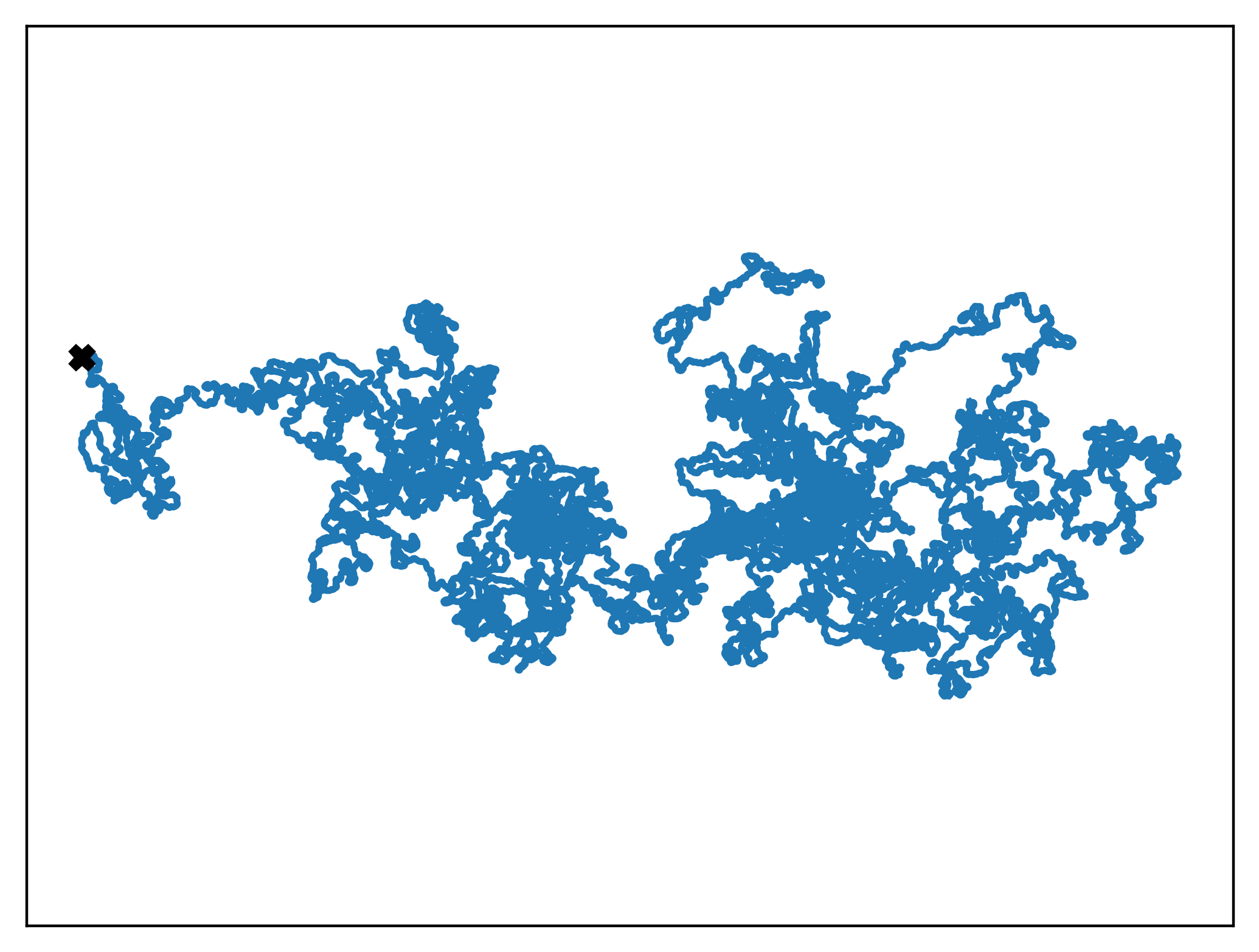}
\includegraphics[scale=0.4]{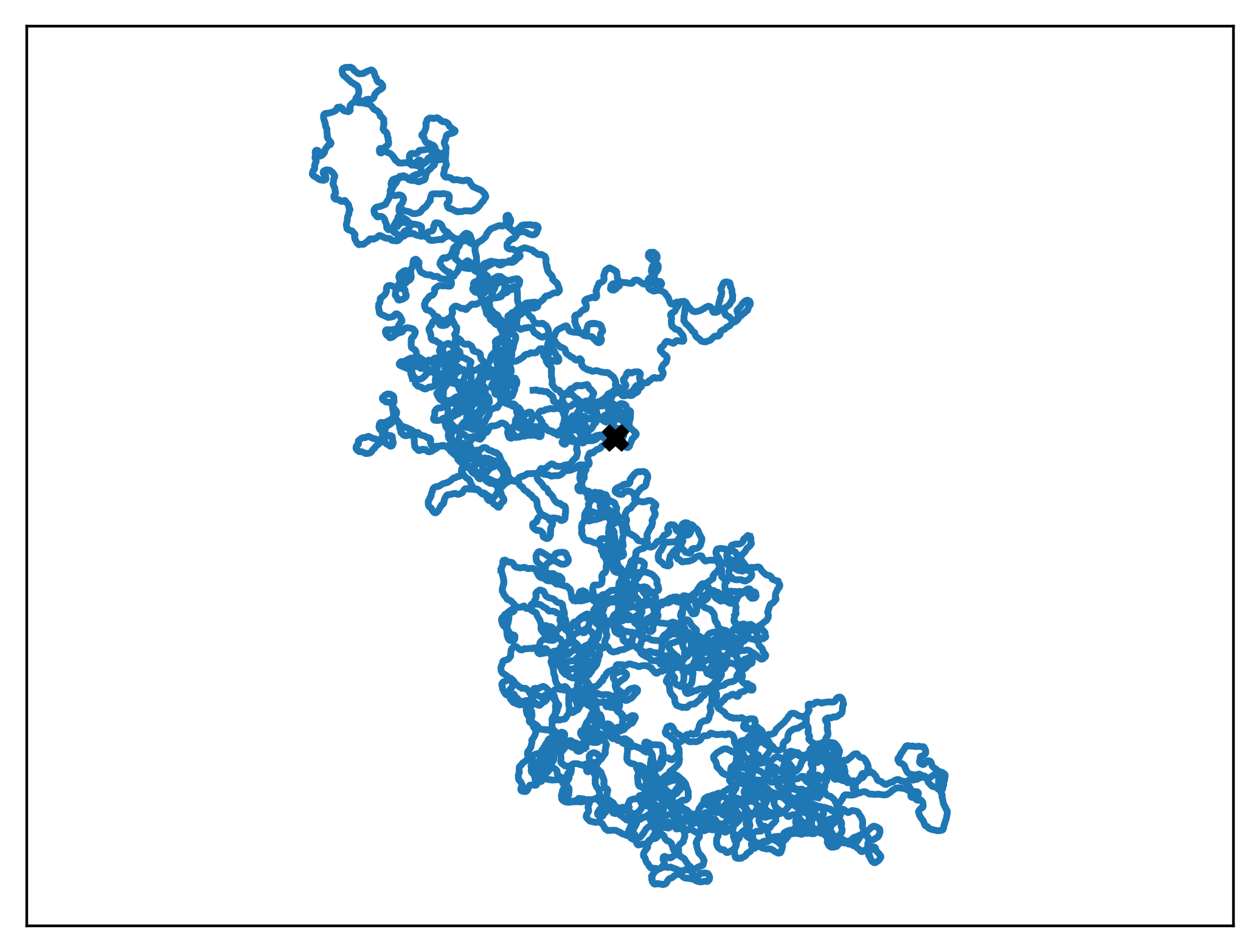}
\caption{\small A realization of the process defined in \thmref{const} for $\alpha$ being equal to $\pi/2$ (left), $\pi/8$ (center) or $\pi/16$ (right).} 
\label{fig:thm1}
\end{figure}

\begin{figure}[ht]
\centering
\includegraphics[scale=0.4]{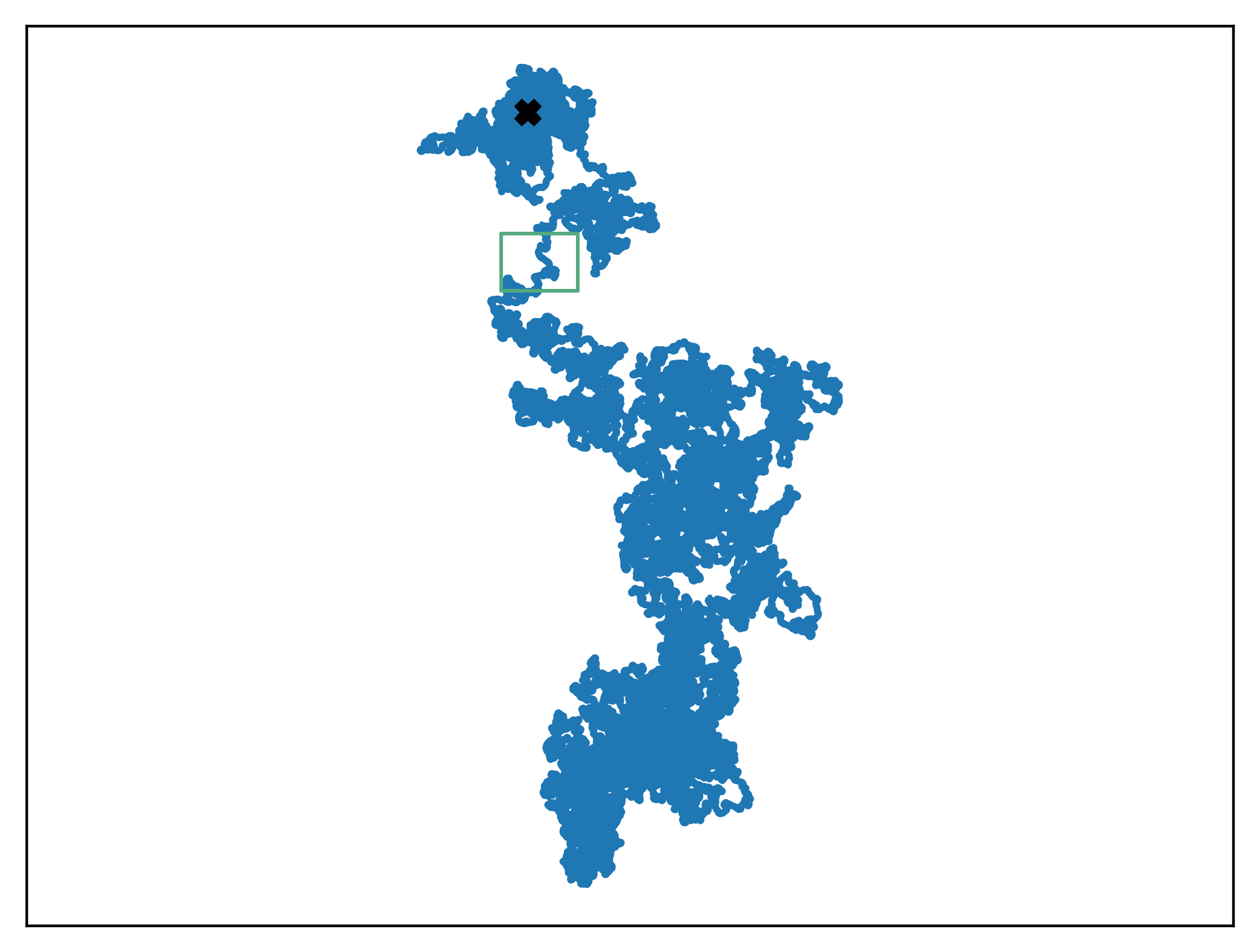}
\includegraphics[scale=0.4]{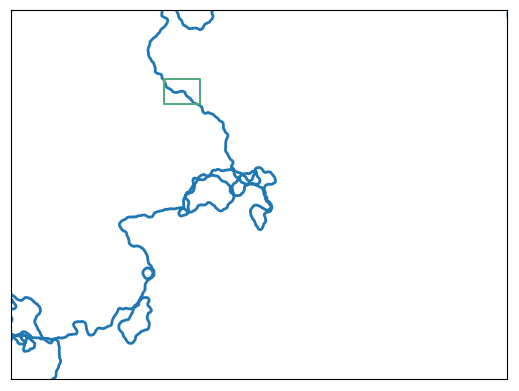}
\includegraphics[scale=0.4]{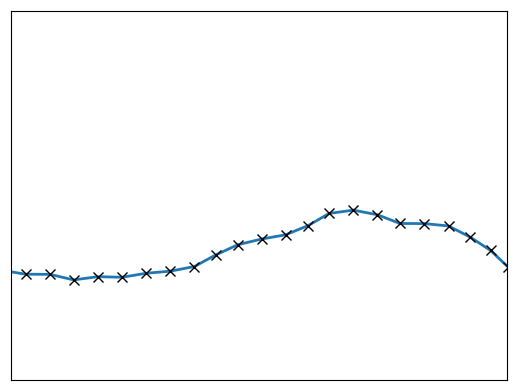}
\caption{\small A realization of the process defined in \thmref{const} for $\alpha = \pi/4$ observed at different scales.}
\label{fig:c0}
\end{figure}

In particular, we recover Donsker's theorem (in dimension 2) when $\alpha = \pi$, the situation in which $\{U_i\}_{i \ge 1}$ are de facto independent (and therefore iid, since they are uniformly distributed on the circle).  In general, however, the limit process is a scaled Brownian motion.

When clear from context, we will use the abbreviation $B$ in place of $B^{(2)}$.

This first result shows that we cannot create smoothness from independent angles, no matter how small we constraint them to be. To prove \thmref{const}, we will first show that the finite-dimensional laws of $\frac{1}{\sqrt{n}}X^n$ converge toward the ones of $B$, that is to say, as $n \to \infty$, 
\begin{align}
	\frac1{\sqrt{n}} \left(X^n_{t_1}, \dots, X^n_{t_k} \right) \weak \sigma_\alpha (B_{t_1}, \dots, B_{t_k}),
\end{align}
for all $0\leq t_1 \leq \dots \leq t_k \leq 1$.
(Here $\weak$ denotes the weak convergence of random vectors in the appropriate dimension, which is $2k$.)
Once this is done, it will remain to show that the sequence of laws of $\frac{1}{\sqrt{n}} X^n$ is tight.\footnote{  Because $\cC_2$ is a polish space, tightness and relative compactness are classically equivalent notions, according to Prohorov's theorem. We will use these terms interchangeably.}

Because the steps, $\{U_i\}_{i \ge 1}$, lack independence (at least when $\alpha < \pi$, which is the situation {\em not} covered by Donsker's theorem), we need a generalization of the central limit theorem for dependent random variables.
(Unless otherwise specified, the convergence is as $n \to \infty$.)

\begin{prp}{(Dependant CLT, \cite{BDG+08})} \label{prp:depclt}
Let $\xi_{i,n}$ be centered with finite second moment random variables in $\bbR^d$. Let $k_n \to \infty$ be a sequence of integers. Suppose that the following conditions hold: 
\begin{align}
&\text{There exists $0<\delta \leq 1$ such that $A_n(\delta) = \sum_{i=1}^{k_n} \expect{\|\xi_{i,n}\|^{2+\delta}} \to 0$;} \\
&\text{There exists a matrix $\Gamma$ such that $\Gamma_n = \sum_{i=1}^{k_n} \cov{\xi_{i,n}} \to \Gamma$;} \\
&\text{For any $t \in \bbR^d$, }
T_n(t) = \sum_{i=1}^{k_n} \big|\cov{f_t(\xi_{1,n} +\dots + \xi_{i-1,n}), f_t(\xi_{i,n})}\big| \to 0, \text{ where $f_t : x \mapsto e^{i \<x,t\>}$.}
\end{align}
Then 
\beq
\text{$S_n = \sum_{i=1}^{k_n} \xi_{i,n} \weak \mathcal{N}(0,\Gamma)$, the centered normal law with covariance matrix $\Gamma$.}
\eeq 
\end{prp}
We will apply \prpref{depclt}, not to the steps $U_j$ themselves, but instead to slices of the random walk, defined in our context as
\beq\label{xi}
\xi_{j,n} = \frac{1}{\sqrt{n}}\sum_{i=(j-1)(p_n + q_n) + 1}^{(j-1)(p_n + q_n) + p_n} U_i.
\eeq
Each slice contains $p_n$ terms, and are $q_n$ terms apart. We will need to have $p_n$ large enough so that the sum $\sum_j \xi_{j,n}$ is close to $\frac{1}{\sqrt{n}}\sum_{i} U_i$, but also $q_n$ large enough so that  the $\xi_{j,n}$'s are all independent enough from each other. 

We start with a covariance inequality.  
\begin{prp} \label{prp:cov}
Let $s_1 \leq \dots \leq s_u$ and $t_1 \leq \dots \leq t_v$ be real numbers in $[0,1]$. Suppose that $s_u \leq t_1$. Then, for any bounded functions $f : \{\bbR^2\}^u \rightarrow \mathbb{R}$ and $g : \{\bbR^2\}^v \rightarrow \mathbb{R}$, we have
\begin{align}
	|\cov{f(U_{s_1},\dots,U_{s_u}), g(U_{t_1},\dots,U_{t_v})}| \leq \|f\|_\infty \|g\|_\infty  \tv(\nu_{t_1 - s_u}, \nu) 
\end{align}
where $\nu$ is the uniform law over $[0,2\pi]$ and $\nu_r$ is the law of $\sum_{i=1}^r \Theta_i \mod 2\pi $. 
\end{prp}

\begin{rem} If $f$ and $g$ are complex-valued, this results remains true up to a numeric constant. Indeed, for any random variables $X, Y \in \bbC$, noting $X = X_1 + iX_2$ and $Y = Y_1 + iY_2$, we have 
\begin{align}
|\cov{X,Y}|^2 &= (\cov{X_1, Y_1} -  \cov{X_2, Y_2})^2 + (\cov{X_1, Y_2} +  \cov{X_2, Y_1})^2 \\
&\leq 8 \max_{i,j \in \{1,2\}} |\cov{X_i,Y_j}|^2.  
\end{align} 
\end{rem}

\begin{proof}
We set $\Delta = |\cov{f(U_{s_1},\dots,U_{s_u}),g(U_{t_1},\dots,U_{t_v})}|$. We have
\begin{align}
\Delta &= \big|\, \expect{f(U_{s_1},\dots,U_{s_u}) g(U_{t_1},\dots,U_{t_v})} - \expect{f(U_{s_1},\dots,U_{s_u})} \expect{g(U_{t_1},\dots,U_{t_v})}\big| 
\\
&= \big|\, \expect{f(U_{s_1},\dots,U_{s_u})\left(\expect{g(U_{t_1},\dots,U_{t_v}) \mid \mathcal{F}_{s_u}} -\expect{g(U_{t_1},\dots,U_{t_v})}\right)}\big| \\
&\leq \|f\|_\infty \expect{\big|\expect{g(U_{t_1},\dots,U_{t_v})|\cF_{s_u}} -\expect{g(U_{t_1},\dots,U_{t_v})} \big|}.
\end{align}
Now, notice that the vector  $Z = (U_{t_1},\dots,U_{t_v})$, which takes values in $\{\bbR^2\}^v$, can be written $Z = \exp\{i (\Phi + \Psi)\} Z'$ as follows
\begin{align}
Z 
&= (U_{t_1},\dots,U_{t_v}) \\
&= \textstyle \Big(U_1\exp\Big( i \sum_{j=2}^{t_1} \Theta_j \Big),\dots, U_1 \exp \Big( i\sum_{j=2}^{t_v} \Theta_j\Big) \Big) \\
&= \textstyle \exp\Big( i \sum_{j=2}^{s_u} \Theta_j\Big)\exp\Big( i \sum_{j=s_u +1}^{t_1} \Theta_j\Big) \Big(U_1, U_1 \exp( \Theta_{t_1+1}), \dots, U_1 \exp\Big(\sum_{j= t_1+1}^{t_v} \Theta_j\Big)\Big) \label{line:coord} \\
&=: \exp(i\Phi) \exp(i\Psi) Z'. \label{psiphi}
\end{align}
The random variable $Z'$ has same law as $(U_1, U_{t_2 - t_1 + 1}, \dots, U_{t_v-t_1+1})$, which is the same as $Z$ by strong stationarity of $(U_1,U_2,\dots)$. 
Furthermore, $\Phi$ is $\cF_{s_u}$-measurable, and $\Psi$ and $Z'$ are independent of $\cF_{s_u}$.
Using the fact that the law of $Z$ is rotationally-invariant and letting $\Theta$ be a random variable with law $\nu$ and independent from $Z'$, and denoting by $\zeta$ the law of $Z'$, we get
\begin{align}
\Delta &\leq \|f\|_\infty \expect{\Big|\expect{g(\exp\{i(\Psi +\Phi)\} Z')| \cF_{s_u}} - \expect{g(\exp\{i \Theta\} Z')}\Big|} \\
&\leq \|f\|_\infty\sup_{\phi \in [0,2\pi]} \Big|\expect{g(\exp\{i(\Psi +\phi)\} Z')} - \expect{g(\exp\{i \Theta\} Z')}\Big| \\
&\leq \|f\|_\infty \|g\|_\infty \tv \left(\nu_{t_1-s_u} \otimes \zeta, \nu \otimes \zeta \right) \label{linetv}\\ 
&\leq \|f\|_\infty \|g\|_\infty \tv \left(\nu_{t_1-s_u}, \nu \right).
\end{align}
In \eqref{linetv}, we used that fact that the function $g_{\phi} : (\psi,z) \in [0,2\pi] \times \{\bbR^2\}^v \mapsto g(e^{i(\psi+\phi)}z)$ is bounded by $\|g\|_\infty$, the definition of the total variation distance,\footnote{  Recall that for any probability laws $\bbP$ and $\bbQ$ on some measurable space $(\cX, \cB)$, $\tv(\bbP, \bbQ) = \sup_f\big\{|\bbP(f) - \bbQ(f)|\big\}$ where the supremum is over $f : \cX \to \bbR$ measurable such that $\|f\|_\infty \leq 1$.} and in the last line we used the subadditivity of the latter.
\end{proof}

We turn now to bounding $\tv(\nu_r,\nu)$, which again is the total variation between $\nu_r$, the law of $\sum_{i=1}^r \Theta_i$ (modulo $2\pi$), and $\nu$, the uniform distribution on $[0, 2\pi]$. 
\begin{lem} \label{lem:tv}
Let $\mu$ be a symmetric and absolutely continuous distribution over $\bbR$.  Letting $\nu_r$ denote the distribution $\mu_r = \mu^{* r}$, but modulo $2\pi$, we have
\beq\label{TV-bound}
\tv(\nu_r,\nu) \leq \sum_{k\geq 1} |\phi_\mu(k)|^r
\eeq
where $\phi_\mu$ is the characteristic function of $\mu$. In the special case where $\mu$ is the uniform distribution on $[-\alpha,\alpha]$, where $\alpha \in (0,\pi]$, there exists a positive numeric constant $A$ such that, for $r \ge 2$,
\begin{align}
	\tv(\nu_r,\nu) \leq\frac{A}{\alpha} (\sinc(\alpha) \vee 2/\pi)^r
\end{align}
and so the total variation distance between $\nu_r$ and $\nu$ decreases exponentially fast as $r \to \infty$.
\end{lem}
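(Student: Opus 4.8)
The plan is to regard $\nu_r$ as a probability law on the circle $\bbR/2\pi\mathbb{Z}$, identified with $[0,2\pi)$, and to exploit the elementary fact that the Fourier coefficients of a wrapped law are the values at the integers of the underlying characteristic function. Since $\mu$ is absolutely continuous, so is $\mu_r=\mu^{*r}$, say with density $f_r$, and then $\nu_r$ has density $g_r(x)=\sum_{m\in\mathbb{Z}}f_r(x+2\pi m)$ on $[0,2\pi)$; unfolding the integral, the $k$-th Fourier coefficient of $g_r$ equals $\frac1{2\pi}\int_{\bbR}f_r(x)e^{-ikx}\,dx=\frac1{2\pi}\phi_\mu(k)^r$, where symmetry of $\mu$ makes $\phi_\mu$ real and even. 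Because $\phi_\mu(0)=1$, the $k=0$ coefficient is $\frac1{2\pi}$, which matches the constant density of $\nu$. If $\sum_{k\ge1}|\phi_\mu(k)|^r=\infty$ then \eqref{TV-bound} is vacuous; otherwise the Fourier series of $g_r-\frac1{2\pi}$ converges absolutely, and bounding it termwise and integrating over the period gives $\tv(\nu_r,\nu)\le\sum_{k\ge1}|\phi_\mu(k)|^r$ (using $|\phi_\mu(-k)|=|\phi_\mu(k)|$). This is the first claim.

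For the uniform case, $\phi_\mu(t)=\sinc(\alpha t)$, so it remains to bound $\sum_{k\ge1}|\sinc(\alpha k)|^r$. Set $M=\sinc(\alpha)\vee\tfrac2\pi$ and split at $k_\star=\lfloor\pi/\alpha\rfloor$, noting $k_\star\ge\pi/(2\alpha)$ for $\alpha\in(0,\pi]$. For $1\le k\le k_\star$ one has $0<\alpha k\le\pi$, hence $\sinc(\alpha k)\ge0$; since $\sinc$ decreases on $[0,\pi]$ and $\alpha\le\alpha k$, we get $\sinc(\alpha k)\le\sinc(\alpha)\le M$, and as there are at most $\pi/\alpha$ such $k$ this block is $\le\frac\pi\alpha M^r$. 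For $k>k_\star$ one has $\alpha k>\pi$, hence $|\sinc(\alpha k)|\le\frac1{\alpha k}\le\frac1\pi\le M$, so --- and this is the only place $r\ge2$ is needed --- $|\sinc(\alpha k)|^r\le|\sinc(\alpha k)|^2M^{r-2}\le\frac1{(\alpha k)^2}M^{r-2}$; summing, $\sum_{k>k_\star}\frac1{(\alpha k)^2}\le\frac1{\alpha^2 k_\star}\le\frac2{\pi\alpha}$, and since $M\ge\tfrac2\pi$ this block is $\le\frac2{\pi\alpha}M^{r-2}\le\frac\pi{2\alpha}M^r$. Adding the two blocks, $\sum_{k\ge1}|\sinc(\alpha k)|^r\le\frac{3\pi}{2\alpha}M^r$, so the inequality holds with (say) $A=3\pi/2$, and exponential decay follows since $M<1$.

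The argument is mostly bookkeeping; the one spot deserving care is the split in the second part, which is also where the shape of the bound comes from. The factor $1/\alpha$ is genuine: there are $\asymp\pi/\alpha$ frequencies $k$ with $\alpha k\le\pi$, and for each of those $|\sinc(\alpha k)|$ is only controlled by $\sinc(\alpha)$, not by anything smaller. The competing constant $2/\pi=\sinc(\pi/2)$ enters because once $\alpha k$ passes $\pi$ the trivial envelope $1/(\alpha k)$ takes over and stays below $1/\pi$ thereafter. It is worth separately checking the borderline $\alpha=\pi$, where $\sinc(\alpha)=0$ so $M=2/\pi$ and the first block is empty, and noting that absolute continuity of $\mu$ is used only to make $\nu_r$ have a density, which is what makes the Fourier step clean; without it the right-hand side of \eqref{TV-bound} is typically infinite and the bound is vacuous anyway.
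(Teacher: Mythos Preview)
Your argument is correct. The first half---wrapping $\mu_r$ onto the circle, identifying the Fourier coefficients of the wrapped density with $\phi_\mu(k)^r$, and bounding the $L^1$-distance by the $\ell^1$-norm of the nonzero coefficients---is exactly the paper's argument, stated in slightly different language (the paper names the Poisson summation formula explicitly, you unfold the integral by hand, but the computation is the same).

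For the uniform case your route differs modestly from the paper's and is somewhat cleaner. The paper first separates the cases $\alpha>\pi/2$ and $\alpha\le\pi/2$: in the former it bounds every term by $1/(k\alpha)^r$ and sums via $\zeta(r)$; in the latter it splits at $n_\alpha=\lfloor\pi/\alpha\rfloor$ and controls the tail with $\int_{n_\alpha}^\infty x^{-r}\,dx$. You avoid the case analysis on $\alpha$ altogether by setting $M=\sinc(\alpha)\vee\tfrac{2}{\pi}$ from the start, splitting once at $k_\star=\lfloor\pi/\alpha\rfloor$, and handling the tail via the fixed convergent series $\sum k^{-2}$ after peeling off $M^{r-2}$; this is where you use $r\ge2$, whereas the paper uses it to ensure $\zeta(r)<\infty$ and $\int x^{-r}<\infty$. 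Your version also delivers an explicit constant $A=3\pi/2$, which the paper does not. One cosmetic point: at $\alpha=\pi$ the first block is not literally empty---it contains $k=1$---but its contribution is $|\sinc(\pi)|^r=0$, so the conclusion stands.
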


\begin{proof}
Since $\mu$ is absolutely continuous with respect to the Lebesgue measure, so is $\mu_r$, and for any Borel set $A$ of $[0,2\pi]$ we have
\begin{align}
\nu_r(A) &= \int_{[0,2\pi]} \mathsf{1}_A d\nu_r =  \int_\bbR \sum_{k\in\mathbb{Z}}\mathsf{1}_{A+2k\pi} d\mu_r 
= \sum_{k\in\mathbb{Z}} \int_{2k\pi}^{2(k+1)\pi} \mathsf{1}_{A+2k\pi}(x) \frac{ d\mu_r}{dx}(x)dx \\
&= \int_0^{2\pi} \mathsf{1}_A(x) \sum_{k\in\mathbb{Z}} \frac{d\mu_r}{dx}(x+2k\pi)dx.
\end{align}
The law of $\nu_r$ is thus absolutely continuous with respect to $\nu$, and $\frac{d\nu_r}{d\nu}(x) = 2\pi\sum_{k\in\mathbb{Z}} \frac{d\mu_r}{dx}(x+2k\pi)$. The RHS can be computed with the Poisson summation formula  
\begin{align}
2\pi \sum_{k\in\mathbb{Z}}  \frac{d\mu_r}{dx}(x+2k\pi) 
=  \sum_{k\in\mathbb{Z}} \mathcal{F}\left[\frac{d\mu_r}{dx}\right](k) e^{ikx}
\end{align}
where $\cF$ is the Fourier transform. With the classical property of the convolution product, we can get 
\beq
\mathcal{F}\left[\frac{d\mu_r}{dx}\right](k) = \mathcal{F}\left[\frac{d\mu}{dx}\right]^r (k)  = \phi_\mu (|k|)^r,
\eeq
since $\mu$ is symmetric, so that 
\begin{align} \label{line:nur}
\tv(\nu_r,\nu) &= \frac{1}{2} \int \left|\frac{d\nu_r}{d\nu}-1\right|d\nu \leq \sum_{k\geq 1} |\phi_\mu (k)|^r.
\end{align}
This proves the stated bound \eqref{TV-bound}.

When $\mu$ is uniform over $[-\alpha,\alpha]$, we have $\phi_\mu(k) = \sinc(k\alpha)$.  We use to bound the sum on the RHS of \eqref{TV-bound}.
We distinguish two cases according to the value of $\alpha$. 
If $\alpha > \pi/2$, we immediately get that 
\beq
\sum_{k\geq 1} |\phi_\mu (k)|^r
\leq \sum_{k\geq 1} 1/(k\alpha)^r
\le (\zeta(r)-1) (\pi/2)^{-r}
\le (\zeta(2) -1)(\pi/\alpha) (\pi/2)^{-r},
\eeq
where $\zeta$ is the Riemann zeta function. 
If $\alpha \leq \pi/2$, we split the sum at $n_\alpha = \lfloor \pi/\alpha\rfloor$. For the first part of the sum, we simply have
\beq
\sum_{k=1}^{n_\alpha} |\phi_\mu (k)|^r
\le n_\alpha (\sinc \alpha)^r
\leq (\pi/\alpha) (\sinc \alpha)^r,
\eeq
which is justified because $\sinc$ is decreasing on the segment $[0,\pi]$ and $k\alpha \le \pi$ for all $k \le n_\alpha$. 
For the second part of the sum, 
\begin{align}
\sum_{k>n_\alpha}\frac{1}{(k\alpha)^r} 
\leq \frac{1}{\alpha^r}\int_{n_\alpha}^\infty \frac{dx}{x^r} 
= \frac{1}{(r-1)\alpha^r n_\alpha^{r-1}} 
\leq \frac{n_\alpha}{(r-1)(\pi-\alpha)^r} 
\leq \frac{\pi}{(r-1)\alpha}(\pi/2)^{-r}.
\end{align} 
Summing these two parts, all in all, we indeed get a bound of the desired form. 
\end{proof}

A very simple and straightforward computation of the covariance gives the following   
  \begin{align} \label{line:cov}
 	 \cov{U_j,U_{j+k}} = \frac{1}{2} (\sinc\alpha)^k \Id_2, \quad \text{for all } j,k \in \bbN^*.
 \end{align}

Recall the definition \eqref{xi}.  We have the following.
\begin{lem} \label{lem:l2}
If $p_n$ and $q_n$ are two sequences of integers diverging to $\infty$ such that $p_n + q_n \leq n$ and $q_n \ll p_n \ll n$, then $\expect{\|S_n - S_n^*\|^2} \to 0$ where $S_n = \frac{1}{\sqrt{n}}\sum_{j=1}^n U_j$, $S_n^* = \sum_{k=1}^{k_n} \xi_{k,n}$ and $k_n = \lfloor n/(p_n+q_n) \rfloor $.
\end{lem}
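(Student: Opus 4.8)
The plan is to realize $S_n - S_n^*$ as a normalized sum of exactly those steps $U_i$ that no slice captures, and then to control its second moment via the covariance identity~\eqref{line:cov}.

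First I would set up the bookkeeping. Write $b_n = p_n + q_n$ for the common block length. For each $k \in \{1,\dots,k_n\}$ the slice $\xi_{k,n}$ gathers the $U_i$ whose index $i$ lies in the first $p_n$ positions of the $k$-th block $\{(k-1)b_n + 1, \dots, k b_n\}$; since $k_n b_n \le n$, these index sets are pairwise disjoint and all contained in $\{1,\dots,n\}$. Writing $J_n$ for their union we get $S_n^* = \tfrac{1}{\sqrt n}\sum_{i \in J_n} U_i$ and hence
\begin{align}
S_n - S_n^* = \frac{1}{\sqrt n}\sum_{i \in I_n} U_i, \qquad I_n := \{1,\dots,n\}\setminus J_n .
\end{align}
By construction $I_n$ consists of the $q_n$ trailing indices in each of the $k_n$ blocks, together with the (incomplete) final block $\{k_n b_n + 1,\dots,n\}$, which has fewer than $b_n$ elements; therefore $|I_n| = k_n q_n + (n - k_n b_n) \le k_n q_n + b_n$. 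Since $k_n \le n/b_n \le n/p_n$ this gives $|I_n| \le n q_n/p_n + p_n + q_n$, so that $|I_n|/n \to 0$ under the hypotheses $q_n \ll p_n \ll n$.

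Next I would bound the second moment. The $U_i$ are centered, so expanding the square,
\begin{align}
\expect{\|S_n - S_n^*\|^2} = \frac1n \sum_{i\in I_n}\sum_{j\in I_n} \expect{\<U_i,U_j\>} = \frac1n \sum_{i\in I_n}\sum_{j\in I_n} (\sinc\alpha)^{|i-j|},
\end{align}
where the second equality uses that $\expect{\<U_i,U_j\>}$ is the trace of $\cov{U_i,U_j} = \tfrac12(\sinc\alpha)^{|i-j|}\Id_2$, as follows from~\eqref{line:cov} and stationarity. Enlarging the inner sum to all $j\in\mathbb{Z}$ (legitimate since every term is nonnegative) and summing the resulting geometric series, which converges because $\sinc\alpha \in [0,1)$ for every $\alpha\in(0,\pi]$, yields
\begin{align}
\expect{\|S_n - S_n^*\|^2} \le \frac{|I_n|}{n}\sum_{k\in\mathbb{Z}}(\sinc\alpha)^{|k|} = \frac{|I_n|}{n}\cdot\frac{1+\sinc\alpha}{1-\sinc\alpha} = 2\sigma_\alpha^2\,\frac{|I_n|}{n},
\end{align}
which tends to $0$ by the previous paragraph; this proves the claim.

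I do not anticipate any real obstacle. The only points deserving care are the index bookkeeping that pins down $I_n$ and bounds $|I_n|$ — in particular checking that the slices are disjoint, that they fit inside $\{1,\dots,n\}$, and that the leftover block is counted — and the observation that $\sinc\alpha$ is \emph{strictly} less than $1$ on all of $(0,\pi]$, which is precisely what makes the limiting constant $\frac{1+\sinc\alpha}{1-\sinc\alpha}$ (equivalently $2\sigma_\alpha^2$) finite.
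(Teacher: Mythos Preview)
Your argument is correct. You and the paper both identify $S_n - S_n^*$ as $\tfrac{1}{\sqrt n}$ times the sum of the $U_i$ over the ``missing'' indices (the $q_n$-gaps and the incomplete last block), and both rely on the covariance identity~\eqref{line:cov}. The paper, however, organizes those missing indices into $k_n+1$ blocks $\xi^*_{i,n}$ and bounds the second moment by separately treating the diagonal contributions $\tr\cov{\xi^*_{i,n}}$ and the off-diagonal ones, the latter via the decay $(\sinc\alpha)^{|i-j|p_n}$ between blocks. Your route is shorter and more transparent: once you know $|I_n|/n\to 0$, enlarging the inner sum to all of $\mathbb{Z}$ immediately converts the whole second moment into $|I_n|/n$ times the absolutely convergent series $\sum_{k\in\mathbb{Z}}(\sinc\alpha)^{|k|}$, yielding exactly the same rate $O(q_n/p_n+p_n/n)$ without any block-by-block bookkeeping. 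The paper's block decomposition buys nothing extra here; your version is simply cleaner.
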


This result appears in \citep[Sec 4.3.1]{DW06} in the context of real-valued time series.  Although this is not difficult, we extend the result to bivariate time series for the sake of completeness. 

\begin{proof}
We start with the fact that
\beq
S_n - S_n^*= \sum_{i=1}^{k_n+1} \xi^*_{i,n},
\eeq
where
\begin{align}
\xi^*_{k,n} = \frac{1}{\sqrt{n}} \sum_{i=(k-1)(p_n+q_n)+p_n+1}^{k(p_n+q_n)} U_i, \quad \text{for $k\leq k_n$}; \qquad \xi^*_{k_n + 1,n} = \frac{1}{\sqrt{n}} \sum_{i=k_n (p_n+q_n)}^{n} U_i.
\end{align}
Simple calculations give
\begin{align}
\expect{\|S_n - S_n^*\|^2} &= \expect{\left\|\sum_{i=1}^{k_n+1} \xi^*_{i,n}\right\|^2} \leq 2\, \expect{\left\|\sum_{i=1}^{k_n} \xi^*_{i,n}\right\|^2} + 2\, \expect{\left\|\xi^*_{k_n+1,n}\right\|^2} \\
&\leq 2\sum_{1\leq i,j \leq k_n} \tr\left(\cov{\xi_{i,n}^*, \xi_{j,n}^*}\right) + 2\tr\left(\cov{ \xi^ *_{k_n+1,n}}\right).
\end{align}

When $i = j$, we have, since $U_j$ is strongly stationary and with the formula of \lineref{cov}, 
\begin{align} 
\tr\left(\cov{\xi_{i,n}^*, \xi_{i,n}^*}\right) &= \frac{q_n}{n}\mathbb{E}[\|U_0\|^2] + \frac{2}{n} \sum_{1\leq j<k\leq q_n} \tr(\cov{U_j,U_k}) \\
&= \frac{q_n}{n} + \frac{2}{n}\sum_{k=1}^{q_n-1} (q_n-k)(\sinc\alpha)^k = O\left(\frac{q_n}{n}\right).
\end{align}
We have, likewise, $\tr\left(\cov{\xi^*_{k_n+1,n}}\right) = O(p_n/n)$. 

When $i\neq j$, the steps of  $\xi_{i,n}$ and $\xi_{j,n}$ are at least $|i-j|p_n$ apart, so that, using again the equality of \lineref{cov},
\begin{align}
\tr\left(\cov{\xi_{i,n}^*, \xi_{j,n}^*}\right) &\leq \frac{q_n^2}{n} \sup_{k \geq |i-j|p_n} \tr\left(\cov{U_1,U_{k+1}}\right) \leq \frac{q_n^2}{n} (\sinc\alpha)^{|i-j|p_n}.
\end{align}

Combining these bounds, we get that 
\begin{align}
\mathbb{E}[\|S_n - S_n^*\|^2] 
&= O\left(\frac{k_n q_n}{n} +\frac{p_n}{n}+ \frac{q_n^2}{n}\sum_{k=1}^{k_n} (k_n-k)(\sinc\alpha)^{kp_n} \right) \\
&= O\left( \frac{q_n}{p_n} + \frac{p_n}{n} + \frac{q_n^2 k_n}{n} (\sinc\alpha)^{p_n} \right) \\
&= O\left( \frac{q_n}{p_n} + \frac{p_n}{n}\right) \longrightarrow 0,
\end{align}
which ends the proof.
\end{proof}

In view of \lemref{l2}, it is thus sufficient to establish the convergence in law for $S_n^*$ to deduce the same for $S_n$.
This is exactly what we do next.
 
\begin{lem} \label{lem:fd}
The finite-dimensional laws of $\frac{1}{\sqrt{n}} X^n$ converge towards the ones of $\sigma_\alpha B$. 
\end{lem}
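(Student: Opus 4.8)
The plan is to reduce \lemref{fd} to a multivariate central limit theorem for the increments of the walk over a fixed family of disjoint index-ranges, and then to obtain that theorem by feeding suitably ``sliced'' block vectors into the dependent CLT, \prpref{depclt}. We may assume $t_k>0$, the case $t_k=0$ being trivial.

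Since the interpolation remainder in \eqref{X} has norm at most $1/\sqrt n$ pointwise, Slutsky's lemma lets us replace each $\tfrac1{\sqrt n}X^n_{t_\ell}$ by the partial sum $\tfrac1{\sqrt n}\sum_{j\le\lfloor nt_\ell\rfloor}U_j$. Setting $t_0=0$ and $\Delta^n_\ell:=\tfrac1{\sqrt n}\sum_{j=\lfloor nt_{\ell-1}\rfloor+1}^{\lfloor nt_\ell\rfloor}U_j$, these partial sums are the cumulative sums $\big(\sum_{m\le\ell}\Delta^n_m\big)_{\ell\le k}$, while $\sigma_\alpha(B_{t_1},\dots,B_{t_k})$ is the cumulative sum of the vector $\big(\sigma_\alpha(B_{t_\ell}-B_{t_{\ell-1}})\big)_{\ell\le k}$, whose coordinates are independent with the $\ell$-th equal in law to $\mathcal N\big(0,\sigma_\alpha^2(t_\ell-t_{\ell-1})\Id_2\big)$. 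By the continuous mapping theorem it therefore suffices to prove
\begin{align}
(\Delta^n_1,\dots,\Delta^n_k)\ \weak\ \bigotimes_{\ell=1}^k\mathcal N\big(0,\sigma_\alpha^2(t_\ell-t_{\ell-1})\Id_2\big).
\end{align}

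Fix $q_n=\lceil(\log n)^2\rceil$ and $p_n=\lceil(\log n)^3\rceil$, so that $\log n\ll q_n\ll p_n\ll n$, and cut $\{1,\dots,\lfloor nt_k\rfloor\}$ into consecutive slices of length $p_n$ separated by gaps of length $q_n$, exactly as in \eqref{xi}. For each slice $i$, let $\zeta_{i,n}\in(\bbR^2)^k$ be the vector whose $\ell$-th $\bbR^2$-coordinate is $\tfrac1{\sqrt n}\sum U_j$, the sum being over the indices $j$ lying in slice $i$ and in $(\lfloor nt_{\ell-1}\rfloor,\lfloor nt_\ell\rfloor]$. A short $L^2$ computation using the geometric decay in \lineref{cov}, in the spirit of \lemref{l2} (the only discarded indices are the $O(n/(p_n+q_n))$ gaps of length $q_n$ plus one end fragment), shows that $\sum_i\zeta_{i,n}$ differs from $(\Delta^n_1,\dots,\Delta^n_k)$ by a term that vanishes in $L^2$; hence it is enough to show that $\sum_i\zeta_{i,n}$ converges to the product law above. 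This we do by verifying the three conditions of \prpref{depclt} in dimension $d=2k$. \emph{(a)} Since $\|\zeta_{i,n}\|\le p_n/\sqrt n$ almost surely and there are at most $n$ slices, $A_n(1)\le n\,(p_n/\sqrt n)^3\to0$. \emph{(b)} Summing \lineref{cov} over a single slice: a slice entirely contained in $(\lfloor nt_{\ell-1}\rfloor,\lfloor nt_\ell\rfloor]$ contributes $\tfrac{p_n}{n}\sigma_\alpha^2\Id_2+O(1/n)$ to the $\ell$-th diagonal $2\times2$ block of $\cov{\zeta_{i,n}}$ and zero elsewhere; there are $(1+o(1))\,n(t_\ell-t_{\ell-1})/(p_n+q_n)$ such slices, and the at most $k-1$ slices straddling a breakpoint contribute $O(k\,p_n^2/n)\to0$ in total, so $\Gamma_n\to\mathrm{diag}\big(\sigma_\alpha^2(t_\ell-t_{\ell-1})\Id_2\big)_{\ell\le k}$, which is exactly the covariance of the target Gaussian vector. \emph{(c)} The indices used by $\zeta_{i,n}$ start at least $q_n$ after those used by $\zeta_{1,n}+\dots+\zeta_{i-1,n}$, so \prpref{cov} (in the complex-valued form of the remark) together with \lemref{tv} gives $|\cov{f_t(\zeta_{1,n}+\dots+\zeta_{i-1,n}),f_t(\zeta_{i,n})}|\le C_\alpha\,\rho^{q_n}$, with $\rho:=\sinc\alpha\vee(2/\pi)<1$ and $C_\alpha$ independent of $i$ and $t$; summing the at most $n$ such terms gives $T_n(t)\le C_\alpha\,n\,\rho^{q_n}\to0$ because $q_n/\log n\to\infty$. \prpref{depclt} then yields the claimed convergence.

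The only step that is not routine bookkeeping is the \emph{joint} convergence, i.e.\ the asymptotic independence of the increments $\Delta^n_\ell$ over disjoint index-ranges; this is precisely what is delivered by the block structure of the $\zeta_{i,n}$ (each has a single nonzero $\bbR^2$-coordinate, up to the negligibly many straddling slices), which forces the limiting covariance $\Gamma$ in \prpref{depclt} to be block-diagonal. Everything else rests on the exponential mixing bound of \lemref{tv}, the explicit covariance \lineref{cov}, and the $L^2$-approximation of the increment vector by $\sum_i\zeta_{i,n}$ established as in \lemref{l2}.
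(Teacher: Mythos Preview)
Your argument is correct and rests on the same tools as the paper --- the slicing \eqref{xi}, the covariance inequality of \prpref{cov}, the mixing bound of \lemref{tv}, the explicit second moments in \lineref{cov}, and the dependent CLT of \prpref{depclt} --- but you package them differently. The paper first treats each increment $\Delta^n_\ell$ on its own, applying \prpref{depclt} in dimension~$2$ to get the marginal limit $\cN(0,\sigma_\alpha^2(t_\ell-t_{\ell-1})\Id_2)$, and then handles the \emph{joint} law by a separate characteristic-function factorization: after trimming $q_n$ terms at the right edge of each block, it bounds $\big|\expect{e^{i\langle u,Y^n\rangle}}-\prod_j\expect{e^{i\langle u_j,Y^n_j\rangle}}\big|$ directly via \prpref{cov} and \lemref{tv}, and concludes by L\'evy's continuity theorem. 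You instead apply \prpref{depclt} once in dimension $2k$, letting the block-diagonal structure of the limiting covariance emerge from the fact that all but $O(k)$ slices have a single nonzero $\bbR^2$-coordinate. Your route is a bit more economical (one invocation of the CLT instead of a CLT plus a factorization step), at the cost of carrying a $2k$-dimensional covariance computation; the paper's route keeps the CLT two-dimensional and makes the asymptotic independence explicit as a stand-alone statement. Both are perfectly sound.
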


\begin{proof} 
We use the notation introduced in \lemref{l2}. 
We apply \prpref{depclt} to $S_n^* = \sum_{k=1}^{k_n} \xi_{k,n}$, and also use the notation introduced there.

For the first condition, by stationarity, for any $\delta > 0$ we have
\begin{align}
	A_n(\delta) = \sum_{k=1}^{k_n} \expect{\|\xi_{k,n}\|^{2+\delta}} = k_n \expect{\|\xi_{1,n}\|^{2+\delta}} \leq k_n \frac{p_n^{2+\delta}}{n^{1+\delta/2}} \leq \frac{p_n^{1+\delta}}{n^{\delta/2}},
\end{align}
where the first inequality comes from the fact that $\|\xi_{k,n}\| \le p_n/\sqrt{n}$ (due to the triangle inequality and the fact that $U_j \in \bbS^1$ for all $j$), and the second inequality comes from the definition of $k_n$.
It thus suffices that $p_n \ll n^{\delta/(2\delta+2)}$ to have $A_n(\delta)$ converge toward $0$. 

For the third condition, we control $T_n(t)$ with a straightforward application of \prpref{cov} and \lemref{tv}, as follows
\begin{align}
	T_n(t) &= \sum_{j=1}^{k_n} \big|\cov{f_t(\xi_{1,n} +\dots + \xi_{j-1,n}), f_t(\xi_{j,n})}\big| \\
	&\leq \sum_{j=1}^{k_n} 4 \tv(\nu_{q_n},\nu) \label{tnt} \\
	&\leq 4 k_n  \frac{A}{\alpha} (\sinc \alpha \vee 2/\pi)^{q_n}  = O(n\theta^{q_n}),\quad \text{where}~\theta = \sinc \alpha \vee 2/\pi,
\end{align}
to see that $T_n(t) \to 0$ as soon as $q_n \gg \log n$. 
In \eqref{tnt} we used the fact that $f_t(\xi_{1,n} +\dots + \xi_{j-1,n})$ and $f_t(\xi_{j,n})$ are bounded functions of $U_1, U_2, \dots, U_{(j-2)(p_n+q_n)+p_n}$ and $U_{(j-1)(p_n+q_n)+1}, \dots, U_{(j-1)(p_n+q_n) + p_n}$, respectively.

Finally, for the second condition, using again stationarity and using \eqref{line:cov}, we get that
\begin{align}
	\Gamma_n &= k_n \cov{\xi_{1,n}} =  \frac{k_n}{n} \sum_{1\leq i,j \leq p_n} \cov{U_i,U_j} \\
	&= \frac{k_n}{2n}  \left( p_n + \sum_{p=1}^{p_n} (p_n - p) (\sinc\alpha)^p \right) \Id_2  \\ \label{line:dvpt}
	&= \frac{k_n}{2n}  \left( p_n + 2 (\sinc \alpha) \frac{p_n (1 - \sinc \alpha) + (\sinc\alpha)^{p_n} - 1}{(1 - \sinc \alpha )^2} \right) \Id_2  \to \frac{1}{2}\frac{1+ \sinc \alpha}{1 - \sinc \alpha} \Id_2
\end{align}
where, in the convergence, we used the fact that $p_n k_n \sim n$ and $p_n \to \infty$. 

Thus, for the conditions of \prpref{depclt} to be fulfilled, it suffices to choose sequences $p_n$ and $q_n$ such that $\log n \ll q_n \ll p_n \ll  n^{\delta/(2\delta + 2)}$, which we do.  
We may then apply \prpref{depclt}, to get that $S_n^*$ converges weakly to $\cN(0,\sigma_\alpha^2 \Id_2)$ or, equivalently, to $\sigma_\alpha B_1$.  And in light of \lemref{l2}, we may conclude that the same is true of $S_n = \frac{1}{\sqrt{n}} X^n_1$.

The same argumentation leads as easily to establishing that \smash{$\frac{1}{\sqrt{n}} X^n_t$} converges weakly to $\sigma_\alpha B_t$, and even that $\frac{1}{\sqrt{n}} (X^n_t - X^n_s)$ converges weakly to $\sigma_\alpha (B_t - B_s)$ for any $0 \leq s \leq t \leq 1$. 

Now let $0\leq t_1 \leq \dots \leq t_k \leq 1$ be a sequence of real numbers. Let $t_0 = 0$. We set \begin{align} 
	Z^n = \frac{1}{\sqrt{n}}\left(X^n_{t_1},X^n_{t_2} - X^n_{t_1},\dots,X^n_{t_k} - X^n_{t_{k-1}}\right),
\end{align}
with values in $\{\bbR^2 \}^k$, and write $Z^n = Y^n + \epsilon^n$ where 
\begin{align}
\epsilon^n_j = \sum_{q = \lfloor n t_j \rfloor - q_n + 1}^{\lfloor n t_j \rfloor} U_q + (nt_j - \lfloor n t_j \rfloor) U_{nt_j - \lfloor n t_j \rfloor}.
\end{align}
Similar arguments lead to $\expect{\|\epsilon_j^n\|^2} = O(q_n/n) \to 0$ as soon as $q_n \ll n$, and thus $\expect{\|Z^n - Y^n\|^2} \to 0$, implying that $Z^n$ and $Y^n$ have thereby same limit law, should one of them have a limit law.  In particular, we know that $Y^n_j$ converges weakly towards $\sigma_\alpha (B_{t_j} - B_{t_{j-1}})$ for all $j$.
Let $u = (u_1,\dots,u_k) \in  \{\bbR^2 \}^k$. By recurrence on $k$, it is easy to show the following formula
\begin{align}
	\left| \expect{e^{i\langle u, Y^n \rangle}} - \prod_{j=1}^{k} \expect{e^{i\langle u_j, Y^n_j \rangle}} \right| &\leq \sum_{j=2}^k \left|\cov{e^{i  (\langle u_{1}, Y^n_{1} \rangle +\dots +\langle u_{j-1}, Y^n_{j-1} \rangle)}, e^{i\langle u_j, Y^n_{j}\rangle}} \right|.
\end{align}
With \prpref{cov} and \lemref{tv}, the RHS is bounded from above by $\sum_j  4\alpha^{-1} A \theta^{q_n } = O(\theta^{q_n}) \to 0$ as soon as $q_n \rightarrow \infty$. 
Since we already know that $Y^n_j$ converges weakly towards $\sigma_\alpha (B_{t_j} - B_{t_{j-1}})$, we can conclude using the Levy continuity theorem.
\end{proof}

 We conclude the proof of \thmref{const} with the following result.
 \begin{lem} \label{lem:rcomp}
 The sequence of laws of $\frac{1}{\sqrt{n}} X^n$ is relatively compact. 
 \end{lem}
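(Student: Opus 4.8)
The plan is to deduce relative compactness (equivalently, as noted above, tightness) of the laws of $\tfrac{1}{\sqrt n} X^n$ from the finite-dimensional convergence already obtained in \lemref{fd} together with a uniform fourth-moment bound on increments. Since $X^n_0 = 0$ deterministically, tightness at the origin is automatic, so by the classical moment criterion for tightness in $\cC_2 = C([0,1],\bbR^2)$ it suffices to exhibit a constant $C$, independent of $n$, with
\beq
\expect{\big\|\tfrac{1}{\sqrt n}(X^n_t - X^n_s)\big\|^4} \leq C\,|t-s|^2, \qquad \text{for all } 0\le s\le t\le 1 \text{ and all } n\ge 1.
\eeq
Heuristically the normalization is exactly right: an increment over $[s,t]$ is a sum of about $n|t-s|$ unit steps, so its fourth moment is of order $(n|t-s|)^2$, and dividing by $n^2$ gives $|t-s|^2$; the real content is that the geometric decay of correlations (\lineref{cov}, and more generally \prpref{cov} and \lemref{tv}) prevents anything larger.

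I would prove the displayed bound in two regimes. If $|t-s| < 1/n$, the points $X^n_s, X^n_t$ lie on at most two consecutive unit segments of the interpolating polygon, so $\|X^n_t - X^n_s\| \le n|t-s|$ and hence $\|\tfrac1{\sqrt n}(X^n_t-X^n_s)\|^4 \le n^2|t-s|^4 \le |t-s|^2$ because $n|t-s|\le 1$. If $|t-s|\ge 1/n$, write $X^n_t - X^n_s = \sum_{j=a+1}^{b} U_j + R$, where $b-a = \lfloor nt\rfloor - \lfloor ns\rfloor \le 2n|t-s|$ and $R$ collects the two boundary fractions, each of norm $\le 1$; then $\|R\|^4 \le 16 \le 16(b-a)^2$, and using $\|v\|^4 \le 2(v_1^4 + v_2^4)$ it remains to bound $\expect{(\sum_{j=a+1}^{b} W_j)^4} \le C(b-a)^2$, where $W_j$ is either $\cos\beta_j$ or $\sin\beta_j$ and $\beta_j$ is the argument of $U_j$. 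The sequence $(W_j)$ is strictly stationary, bounded by $1$, centered (each $\beta_j$ is uniform on the circle), and, by \prpref{cov} and \lemref{tv}, satisfies $|\cov{\prod_{j\in I}W_j,\ \prod_{j\in J}W_j}| \le \tv(\nu_d,\nu) \le (A/\alpha)\,\theta^{\,d}$ whenever $I$ lies entirely to the left of $J$ at distance $d$, with $\theta = \sinc\alpha \vee 2/\pi < 1$.

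The crux is thus $\expect{(\sum_{j=a+1}^{b}W_j)^4} = O(m^2)$ with $m = b-a$. I would expand the fourth power over ordered quadruples $j_1\le j_2\le j_3\le j_4$, fix $g = \lceil K\log m\rceil$, and classify them by $\Delta = \max_i(j_{i+1}-j_i)$ and the leftmost position $s\in\{1,2,3\}$ at which $\Delta$ is attained. Quadruples with $\Delta \le g$ number $O(mg^3) = O(m(\log m)^3) = O(m^2)$ and contribute at most $1$ each (the finitely many small $m$ being handled by the trivial bound $m^4 \le m^2\cdot m^2$). For a quadruple with $\Delta > g$ at position $s$, split $\expect{W_{j_1}W_{j_2}W_{j_3}W_{j_4}}$ into a covariance across the large gap, bounded by $(A/\alpha)\theta^g$ — with $K$ chosen so that $m^4\theta^g \le 1$, these terms sum to $O(1)$ — plus the product of the two one-sided expectations; the latter vanishes unless $s=2$ (it equals $\expect{W_{j_1}}\cdot(\cdots) = 0$ when $s=1$, and $(\cdots)\cdot\expect{W_{j_4}} = 0$ when $s=3$), and for $s=2$ the product $\cov{W_{j_1},W_{j_2}}\cov{W_{j_3},W_{j_4}}$ summed over all admissible quadruples is at most $\big(\sum_{j\le j'}|\cov{W_j,W_{j'}}|\big)^2 = O(m^2)$ by the geometric decay in \lineref{cov}. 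Summing the three buckets gives the claim; combined with $m \le 2n|t-s|$ and the $O(1/n^2) \le |t-s|^2$ contribution of $R$, the displayed increment bound follows, and with it the lemma.

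I expect the fourth-moment expansion to be the only genuine work, and it is essentially bookkeeping: one must ensure that each of the $O(m^4)$ ordered quadruples is either annihilated by the exponential mixing bound or absorbed into the $O(m^2)$ pair-times-pair diagonal terms, with all constants uniform in $n$ (the constant may depend on $\alpha$, which is fixed in the theorem). If a shorter argument is preferred, the same moment estimate is an instance of a Rosenthal-type inequality for $\phi$-mixing sequences, and alternatively one may invoke the classical functional CLT for bounded stationary $\phi$-mixing sequences with $\sum_r \phi(r)^{1/2} < \infty$, whose standard proof already supplies the tightness bound needed here. With relative compactness in hand, Prohorov's theorem and \lemref{fd} yield $\tfrac1{\sqrt n}X^n \weak \sigma_\alpha B$, completing the proof of \thmref{const}.
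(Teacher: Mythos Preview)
Your argument is correct, but it is organized quite differently from the paper's.  The paper does not go through the Kolmogorov--Chentsov increment criterion at all.  Instead, with $S_m = \sum_{k=1}^{m} U_k$, it expands
\[
\expect{\|S_m\|^4} \;=\; \sum_{1\le i,j,k,l\le m} \expect{\langle U_i,U_j\rangle\,\langle U_k,U_l\rangle}
\]
and bounds this directly by $C m^2/(1-\sinc\alpha)^2$ using the pointwise bound $|ab|\le |a|\wedge|b|$ for $|a|,|b|\le 1$ together with the explicit geometric decay in \eqref{line:cov}; no splitting into large/small gaps or appeal to \prpref{cov}--\lemref{tv} is needed for this step.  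From the single estimate $\expect{\|S_m\|^4}\le C m^2$ the paper then invokes a maximal inequality for stationary sequences \citep[Thm~10.2]{Bil99} to obtain $\P(\max_{k\le n}\|S_k\|\ge \lambda)\le K n^2/\lambda^4$, and finishes with Billingsley's tightness lemma (\lemref{bil}).

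So the two proofs differ on both fronts: (i) how the fourth moment is bounded --- your coordinate-wise quadruple expansion with a log-gap cutoff versus the paper's one-line inner-product trick --- and (ii) how one passes from the moment bound to tightness --- your Kolmogorov moment criterion on increments versus the paper's maximal inequality plus \lemref{bil}.  Your route is more self-contained and would work in any setting with exponential $\phi$-mixing, whereas the paper's route is shorter here precisely because the explicit covariance formula \eqref{line:cov} and stationarity let one skip the bookkeeping and feed a single fourth-moment bound into off-the-shelf results from \cite{Bil99}.
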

\begin{proof}
For $n \in\bbN$, we now note $S_n = \sum_{k=1}^n U_k$. We have 
\begin{align} 
	\expect{\|S_n\|^4} &= \expect{\Big(\sum_{1\leq i,j\leq n}\langle U_i,U_j \rangle\Big)^2} = \sum_{1\leq i,j,k,l\leq n}\expect{\langle U_i,U_j \rangle \langle U_k,U_l \rangle}.
\end{align}
Using that $ab \leq a\wedge b$ for any $a,b \in [0,1]$, and using \lineref{cov}, we find that
\begin{align}
\expect{\|S_n\|^4}  &\leq \sum_{1\leq i,j,k,l\leq n}\expect{\langle U_i,U_j \rangle} \wedge \expect{\langle U_k,U_l \rangle} = \sum_{1\leq a,b \leq n} (n-a)(n-b) (\sinc\alpha)^{a \vee b} \\
&= 2 \sum_{k=1}^n (n-k)\left(nk - \frac{k(k+1)}{2}\right) (\sinc\alpha)^k \leq 2n^2 \sum_{k=1}^n k (\sinc\alpha)^k \\
&\leq \frac{2n^2}{(1-\sinc \alpha)^2} .
\end{align}
Using \citep[Thm 10.2]{Bil99}, which we may since the process $\{U_k\}$ is stationary, we get that it exists a numeric constant $K > 0$ such that, for any $\lambda > 0$,
\begin{align} \label{line:ineqtens}
\P\left(\max_{k\leq n} \|S_k\| \geq \lambda\right) \leq \frac{K n^2}{(1-\sinc \alpha)^2 \lambda^4}
\end{align}
Then \lemref{bil} below yields tightness, and hence relative compactness, of the sequence of law of $\frac{1}{\sqrt{n}}X^n$.  
\end{proof}

\begin{lem}{\emph{(Lem p.88, \cite{Bil99})}} \label{lem:bil}
 Let $\xi_i$ be stationnary, real-valued and square integrable random variables with variance $\sigma^2$.  Let $W^n_t = \frac{1}{\sigma\sqrt{n}} S_{\nt} + (nt - \nt) \xi_{\nt + 1}$ where $S_k = \sum_{j=1}^k \xi_j$. If
 \begin{align}
 \lim_{\lambda \rightarrow + \infty} \limsup_{n \rightarrow \infty} \lambda^2 \bbP\(\max_{1\leq k \leq n} |S_k| \geq \lambda \sigma \sqrt{n}\) = 0
 \end{align}
 then the sequence of law of $W^n$ is tight.
 \end{lem}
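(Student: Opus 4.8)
\medskip
\noindent\textbf{Proof idea.}
The plan is to verify the classical characterization of tightness in $C([0,1],\bbR)$ (see \citep[Thm~7.3]{Bil99}): the sequence of laws of $W^n$ is tight as soon as (i) the laws of $W^n_0$ are tight, and (ii) for every $\epsilon,\eta>0$ there exist $\delta\in(0,1)$ and $n_0\in\bbN$ with
\begin{align}
\P\big(w(W^n,\delta)\ge\epsilon\big)\le\eta\qquad\text{for all }n\ge n_0,
\end{align}
where $w(x,\delta)=\sup\{|x(s)-x(t)|:|s-t|\le\delta\}$ is the oscillation modulus. Item (i) is trivial, since $W^n_0=0$; and since $C([0,1],\bbR)$ is Polish, each of the finitely many laws $W^1,\dots,W^{n_0-1}$ is tight on its own, so in (ii) only the behaviour of the $\limsup$ over $n$ matters.

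The first step is a purely deterministic estimate on the polygonal path. Fix $\delta\in(0,1)$, put $m=\lceil n\delta\rceil$, and partition $\{0,1,\dots,n\}$ into the $V_n\le 1/\delta+1$ consecutive blocks with left endpoints $0,m,2m,\dots$. On each interval $[k/n,(k+1)/n]$ the path $W^n$ is affine with $|W^n_t-W^n_{k/n}|\le|\xi_{k+1}|/(\sigma\sqrt n)=|S_{k+1}-S_k|/(\sigma\sqrt n)$, and any $s\le t$ with $t-s\le\delta$ satisfy $\lfloor nt\rfloor-\lfloor ns\rfloor\le n\delta+1\le 2m$, so that $[s,t]$ meets at most three adjacent blocks; elementary bookkeeping then yields, for an absolute constant $C$ and with only indices $\le n$ occurring,
\begin{align}
w(W^n,\delta)\ \le\ \frac{C}{\sigma\sqrt n}\,\max_{0\le v\le V_n}\ \max_{0\le k\le Cm}\ \big|S_{vm+k}-S_{vm}\big|.
\end{align}

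The second step couples a union bound with stationarity and the hypothesis. By stationarity, $S_{vm+k}-S_{vm}$ has the law of $S_k$ for each $v$, so
\begin{align}
\P\big(w(W^n,\delta)\ge\epsilon\big)\ \le\ (V_n+1)\,\P\Big(\max_{0\le k\le Cm}|S_k|\ \ge\ \tfrac{\epsilon}{C}\,\sigma\sqrt n\Big).
\end{align}
For $n$ large, $m\le n\delta+1\le 2n\delta$, hence $\sqrt n\ge c'\sqrt{Cm}/\sqrt\delta$ and $\tfrac{\epsilon}{C}\sigma\sqrt n\ge\Lambda\,\sigma\sqrt{Cm}$ with $\Lambda:=c''\epsilon/\sqrt\delta$. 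Setting $\psi(\lambda):=\limsup_{N\to\infty}\lambda^2\,\P(\max_{k\le N}|S_k|\ge\lambda\sigma\sqrt N)$, the hypothesis says precisely that $\psi(\lambda)\to0$ as $\lambda\to\infty$; and since $Cm\to\infty$ as $n\to\infty$ for fixed $\delta$, taking $\limsup_n$ above yields
\begin{align}
\limsup_{n\to\infty}\P\big(w(W^n,\delta)\ge\epsilon\big)\ \le\ \Big(\tfrac1\delta+2\Big)\frac{\psi(\Lambda)}{\Lambda^2}\ \le\ \frac{C'''}{\epsilon^2}\,\psi\!\Big(\frac{c''\epsilon}{\sqrt\delta}\Big),
\end{align}
where the last inequality uses $\Lambda^2\asymp\epsilon^2/\delta$ to cancel the $1/\delta$ produced by the union bound. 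Given $\epsilon,\eta>0$, we then pick $\Lambda_0$ with $\psi(\lambda)\le\epsilon^2\eta/C'''$ for all $\lambda\ge\Lambda_0$, and choose $\delta\in(0,1)$ small enough that $c''\epsilon/\sqrt\delta\ge\Lambda_0$; the right-hand side is then $\le\eta$, so (ii) holds and tightness follows.

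The step I expect to be the main obstacle is the deterministic oscillation bound: one must cover an arbitrary pair $s\le t$ with $t-s\le\delta$ by $O(1)$ consecutive blocks of grid-length $\asymp m$, absorb the affine ``last-step'' term $(nt-\lfloor nt\rfloor)\xi_{\lfloor nt\rfloor+1}$ into the same block maxima, and keep the constants explicit enough that, after the rescaling $\sqrt n\leftrightarrow\sqrt m$ with $m\asymp n\delta$, the factor $1/\delta$ from the union bound is exactly absorbed by the $\Lambda^{-2}\asymp\delta/\epsilon^2$ coming from the hypothesis. A secondary but necessary care concerns the order of limits: the hypothesis only controls $\limsup_N\lambda^2\P(\cdots)$ for each fixed $\lambda$, so one first sends $n\to\infty$ (legitimate because $m=m(n,\delta)\to\infty$) and only afterwards exploits the smallness of $\psi$ at $\lambda=\Lambda(\delta)$.
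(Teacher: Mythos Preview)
The paper does not give its own proof of this lemma: it is quoted verbatim from Billingsley (the citation ``Lem p.~88, \cite{Bil99}'' in the statement is the entire extent of the paper's justification), and is used as a black box in the proofs of \lemref{rcomp} and \thmref{brown}. So there is nothing in the paper to compare your argument against.

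That said, your proof is correct and is essentially the argument Billingsley gives on pp.~87--89: verify the oscillation criterion for tightness in $C[0,1]$ by blocking the index set into $\asymp 1/\delta$ blocks of length $m\asymp n\delta$, control the modulus by the maximal partial sums over adjacent blocks, use stationarity to reduce each block to $\max_{k\le m}|S_k|$, and then observe that the rescaling $\sqrt{n}\asymp\sqrt{m}/\sqrt{\delta}$ turns the threshold into $\Lambda\asymp\epsilon/\sqrt{\delta}$, so that the $1/\delta$ from the union bound is exactly cancelled by the $\Lambda^{-2}$ coming from the hypothesis. Your handling of the affine last-step term and of the order of limits (first $n\to\infty$ with $\delta$ fixed, then $\delta\to 0$) is also the standard one. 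One cosmetic point: in the displayed bound $(V_n+1)\,\psi(\Lambda)/\Lambda^2$ the factor $V_n+1$ should already have been replaced by its uniform bound $1/\delta+2$ before passing to the $\limsup$, since $V_n$ depends on $n$; you do this implicitly, but it is worth making explicit.
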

 
\section{Construction based on a triangular array of angles} 
\label{sec:var}

We now place ourselves in the setting where the laws of the angles $\Theta_j$ can vary with $n$. Let $\{\Theta_{j,n}\}_{j \ge 1, n \ge 1}$ be a collection of real valued random variables. 
As in \secref{const}, define the following process: Starting with $U_{1,n}$ drawn uniformly at random from $\bbS^1$, recursively define 
\beq
U_{j,n} = e^{i\Theta_{j,n}} U_{j-1,n}, \quad \text{for } j\geq 2,
\eeq
and then
\begin{align}
	X^n_t = \sum_{j=1}^{\nt} U_{j,n} + (nt - \nt) U_{\nt+1,n}\quad \text{for}~t \in [0,1].
\end{align}
For the most part, we will normalize $X^n$ with $1/n$ this time, instead of $1/\sqrt{n}$ as we previously did. 
Note that, if one wants to obtain a smooth --- and thus rectifiable --- curve at the limit, this is the only reasonable normalization. 

\begin{lem} \label{lem:lip}
For any $n \ge 1$, as a function on $[0,1]$ with values in $\bbR^2$, $\frac{1}{n}X^n$ is $1$-Lipschitz.  
\end{lem}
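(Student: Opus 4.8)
The plan is to exploit the fact that $X^n$ is a piecewise-linear path whose every linear segment is traversed at speed exactly $n$, because all the increments $U_{j,n}$ lie on the unit circle $\bbS^1$. This forces $X^n$ to be $n$-Lipschitz on $[0,1]$, so that $\frac1n X^n$ is $1$-Lipschitz. I would establish the bound by a direct computation of $X^n_t - X^n_s$ rather than by an arc-length argument.

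Fix $0 \le s \le t \le 1$ and set $m = \lfloor ns \rfloor$ and $M = \lfloor nt \rfloor$. If $m = M$, then $s$ and $t$ lie on the same linear piece and one reads off directly that $X^n_t - X^n_s = n(t-s)\, U_{m+1,n}$, so the claim is immediate. If $m < M$, subtracting the defining expressions for $X^n_s$ and $X^n_t$ and regrouping the terms yields
\begin{align}
X^n_t - X^n_s = (m+1-ns)\, U_{m+1,n} + \sum_{j=m+2}^{M} U_{j,n} + (nt - M)\, U_{M+1,n}.
\end{align}
Every scalar coefficient appearing here is nonnegative — indeed $m+1-ns \ge 0$ since $ns < m+1$, $nt - M \ge 0$ since $nt \ge M$, and the remaining coefficients all equal $1$ — and they sum to $(m+1-ns) + (M-m-1) + (nt-M) = n(t-s)$. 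Since $\|U_{j,n}\| = 1$ for every $j$, the triangle inequality gives $\|X^n_t - X^n_s\| \le n(t-s)$, hence $\|\tfrac1n X^n_t - \tfrac1n X^n_s\| \le |t-s|$, which is the assertion.

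There is no genuine obstacle in this argument: the only care needed is the bookkeeping with the floor functions and the separate treatment of the degenerate case $m = M$, where $s$ and $t$ belong to a common segment. Everything else is the triangle inequality together with the constraint $U_{j,n} \in \bbS^1$. One could equivalently phrase the proof by observing that $t \mapsto X^n_t$ is differentiable off the finite set $\{k/n : 0 \le k \le n\}$, with $\|\frac{d}{dt} X^n_t\| = n\,\|U_{\lfloor nt\rfloor + 1,n}\| = n$ wherever the derivative exists, and then integrating.
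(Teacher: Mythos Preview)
Your proof is correct and follows essentially the same approach as the paper's: both write $X^n_t - X^n_s$ explicitly as a combination of the $U_{j,n}$'s with nonnegative scalar coefficients summing to $n(t-s)$, and then apply the triangle inequality together with $\|U_{j,n}\|=1$. Your treatment is in fact slightly more careful, since you handle the degenerate case $\lfloor ns\rfloor = \lfloor nt\rfloor$ separately, whereas the paper's single displayed formula tacitly assumes $\lfloor ns\rfloor < \lfloor nt\rfloor$.
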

\begin{proof}
For $0 \le s \leq t \le 1$ and $n\in\mathbb{N}$, we have
\begin{align} 
\left\|\frac{1}{n}X^n_t - \frac{1}{n}X^n_s\right\| &= \frac{1}{n} \left\| \sum_{k=\ns+2}^{\nt} U_{k,n} + (nt - \nt) U_{\lfloor nt \rfloor + 1,n}  + (1-ns+\ns) U_{\ns + 1,n} \right\| \\
&\leq \frac{1}{n} (\nt-\ns-1 + (nt-\nt) + (1-ns +\ns)) 
= t-s,
\end{align}
by a simple application of the triangle inequality and the fact that $U_{k,n} \in \bbS^1$ for all $k$.
\end{proof}

\begin{cor}\label{cor:lip}
As sequence of laws on $\cC_2$, $\{\frac{1}{n}X^n\}$ is relatively compact.
\end{cor}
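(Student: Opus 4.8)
The plan is to exploit the deterministic Lipschitz bound from \lemref{lip} to confine every realization of $\tfrac1n X^n$ to a single compact subset of $\cC_2$, and then invoke Prohorov's theorem. First I would note that $\tfrac1n X^n_0 = 0$ almost surely: taking $t = 0$ in the definition of $X^n_t$ makes the sum $\sum_{j=1}^{\lfloor nt\rfloor}$ empty and kills the remainder term $(nt - \lfloor nt\rfloor) U_{1,n}$. Combined with \lemref{lip}, this shows that for every $n$ the path $\tfrac1n X^n$ lies, with probability one, in
\[
K := \big\{ f \in \cC_2 : f(0) = 0 \ \text{and} \ \|f(t) - f(s)\| \le |t - s| \ \text{for all } s,t \in [0,1] \big\}.
\]

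Next I would check that $K$ is a compact subset of $\cC_2$ for the uniform topology. Every $f \in K$ satisfies $\|f(t)\| = \|f(t) - f(0)\| \le 1$ on $[0,1]$, so $K$ is uniformly bounded; all its elements share the modulus of continuity $\delta \mapsto \delta$, so $K$ is uniformly equicontinuous; and $K$ is closed, because a uniform limit of functions that vanish at $0$ and are $1$-Lipschitz again vanishes at $0$ and is $1$-Lipschitz. By the Arzel\`a--Ascoli theorem, $K$ is compact.

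Finally, since $\P\big(\tfrac1n X^n \in K\big) = 1$ for all $n$, the single compact set $K$ certifies the tightness of the family $\{\mathrm{Law}(\tfrac1n X^n)\}_{n \ge 1}$: given $\varepsilon > 0$, one simply takes the compact set $K$ itself. Because $\cC_2$ is Polish, Prohorov's theorem identifies tightness with relative compactness, which is the claim. There is no genuine obstacle in this argument; the only points that deserve an explicit sentence are that the initial point of the walk is deterministic and that $K$ --- rather than merely its closure --- is closed, so that Arzel\`a--Ascoli applies to $K$ directly.
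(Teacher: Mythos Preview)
Your proof is correct and follows essentially the same route as the paper: the paper's own argument is a one-sentence appeal to \lemref{lip} and the Arzel\`a--Ascoli theorem applied to the set of $1$-Lipschitz functions vanishing at the origin, and you have simply written out the details (the check that $X^n_0=0$, closedness of $K$, and the Prohorov step) that the paper leaves implicit.
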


\begin{proof}
This is an immediate consequence of \lemref{lip} and the fact that the set of $1$-Lipschitz functions from $[0,1]$ to $\bbR^2$ taking value $(0,0) \in \bbR^2$ at $0$ is relatively compact by the Arzel\`a-Ascoli theorem.  
\end{proof}

We first investigate the case where $\Theta_{j,n}, j \ge 1$ are iid from the uniform distribution on $[-\alpha_n,\alpha_n]$, where 
\beq\label{alpha_n}
\text{$\alpha_n \in (0,\pi]$ is a sequence of angles converging to 0.}
\eeq
We observe two degenerate regimes when $\alpha_n$ converges either too fast or too slow towards 0.

\begin{prp} \label{prp:easy}
Consider a sequence of angles as in \eqref{alpha_n}.
If $n \alpha_n^2 \to \infty$, then $\frac{1}{n} X^n \weak 0$ in $\cC_2$. If $n \alpha_n^2 \to 0$, then $\frac{1}{n} X_t^n \weak tU$ in $\cC_2$, where $U$ denotes a random vectors with the uniform distribution on $\bbS^1$. 
\end{prp}

\begin{proof}
We first suppose that $n \alpha^2_n \to \infty$. In this case, we have for any $t$, developing  the square like we did \lineref{dvpt},
\begin{align} \label{line:expr}
\expect{\left\|\frac{1}{n}X^n_t\right\|^2} = 2(\sinc \alpha_n) \frac{\nt (1-\sinc \alpha_n) + (\sinc\alpha_n)^{\nt} - 1}{n^2(1-\sinc \alpha_n)^2} + O\left(\frac{1}{n}\right)
\end{align}
where the $O(1/n)$ term corresponds to the one coming from $U_{\nt + 1,n}$ in the definition of $X^n_t$. Since 
\beq
(\sinc\alpha_n)^{\nt} = \exp\big\{\nt \log(1- \alpha_n^2/6 + o(\alpha_n^2))\big\} = \exp\big\{-\nt \alpha_n^2 + o(n \alpha_n^2)\big\} \to 0,
\eeq 
and $n(1-\sinc\alpha_n) \sim n \alpha_n^2 / 6$, we find that 
\begin{align}
\expect{\left\|\frac{1}{n}X^n_t\right\|^2} = O\left(\frac{1}{n\alpha_n^2}\right) + O\left(\frac{1}{n}\right) \longrightarrow 0.
\end{align}
Finite-dimensional laws of $\frac{1}{n}X^n$ all converge to $0$ and thus $\frac{1}{n} X^n \weak 0$ in $\cC_2$ by relative compactness (\corref{lip}). 

We now assume that $n \alpha_n^2 \to 0$. We then get 
\begin{align}
1 - (\sinc\alpha_n)^{\nt} = 1 - \exp\left(-\frac{1}{6}\nt \alpha_n^2+ o(n \alpha_n^2)\right) = \frac{1}{6} \nt \alpha_n^2 + o(n\alpha_n^2),
\end{align}
so that
\begin{align} 
\frac{1}{n} \frac{1-(\sinc\alpha_n)^{\nt}}{1-\sinc \alpha_n} \longrightarrow t, \quad \text{for any } t \in [0,1].
\end{align}
Developing \lineref{expr} to the next order, we find 
\begin{align}
\nt (1-\sinc \alpha_n) + (\sinc\alpha_n)^{\nt} - 1 = \frac{1}{72} \nt^2 \alpha_n^4 + o(n^2 \alpha_n^4),
\end{align}
and this leads to $\expect{\|\frac{1}{n}X^n_t\|^2} \to t^2$. 
We then conclude with 
\begin{align}
\expect{\left\|\frac{1}{n}X^n_t - tU_{1,n}\right\|^2} &= \expect{\left\|\frac{1}{n} X^n_t\right\|^2} + t^2 -2\frac{t}{n} \expect{\langle X^n_t , U^n_1 \>}\\
&= t^2 + o(1) + t^2 + 2\frac{t}{n} \sum_{j=1}^n\expect{\langle U_j , U^n_1 \> }  \\ 
&= 2 t^2 - 2t \frac{1}{n} \frac{1-(\sinc\alpha_n)^{\nt}}{1-\sinc \alpha_n}  + o(1)  \longrightarrow 0 \label{inner}
\end{align}
where at \eqref{inner} we used \eqref{line:cov}, together with the relative compactness of $\{\frac1n X^n\}$ as a sequence of laws (\corref{lip}).
\end{proof}

When $n \alpha_n^2 \to \infty$ sufficiently fast, with a different normalization, $X^n$ in fact converges to a Brownian motion.  The precise normalization that results in this is given below.
(In a sense, \thmref{const} is a special case of this.)

\begin{thm} \label{thm:brown}
Consider a sequence of angles as in \eqref{alpha_n}.
If $n \alpha_n^2 \gg n^{\omega} $ for some $\omega \in (0,1)$, then
\begin{align}
	\frac{\alpha_n}{\sqrt{n}} X^n \weak \sqrt{3} B.
\end{align}
\end{thm}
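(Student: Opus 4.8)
The plan is to reproduce, with $\alpha_n$ in place of $\alpha$, the two-step scheme behind \thmref{const}: first establish convergence of the finite-dimensional laws by applying the dependent central limit theorem \prpref{depclt} to widely separated \emph{slices} of the walk, then establish tightness separately. The only genuinely new feature is that the correlation length of the steps --- which one reads off \eqref{line:cov} as being of order $1/(1-\sinc\alpha_n)\asymp 1/\alpha_n^2$ --- now diverges, so the slices must be taken long enough to contain it; this is exactly what the hypothesis $n\alpha_n^2\gg n^\omega$ buys us. Throughout I would use the rotational invariance of the construction to condition on $U_{1,n}$, so that $X^n_t$ reads as $\sum_{j=1}^{\nt} e^{i S_{j,n}}$, with $S_{j,n}=\Theta_{2,n}+\dots+\Theta_{j,n}$ a centered random walk of per-step variance $\alpha_n^2/3\sim 2(1-\sinc\alpha_n)$.

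\textbf{Tightness.} Note first that \corref{lip} is of no use here, since $\tfrac{\alpha_n}{\sqrt n}X^n=(\alpha_n\sqrt n)\tfrac1n X^n$ is $(\alpha_n\sqrt n)$-Lipschitz and $\alpha_n\sqrt n=\sqrt{n\alpha_n^2}\to\infty$. Instead I would rerun the fourth-moment bound of \lemref{rcomp} with $\alpha=\alpha_n$: it gives $\mathbb{E}\|\sum_{k=1}^m U_{k,n}\|^4\le 2m^2/(1-\sinc\alpha_n)^2$, so that, after multiplying by $(\alpha_n/\sqrt n)^4$ and using $1-\sinc\alpha_n\sim\alpha_n^2/6$,
\begin{align}
\mathbb{E}\,\big\|\tfrac{\alpha_n}{\sqrt n}(X^n_t-X^n_s)\big\|^4\le K\,(t-s)^2
\end{align}
for a numeric constant $K$, uniformly over $0\le s\le t\le1$ (the partial steps at the endpoints contributing only $O((\alpha_n/\sqrt n)^4)$). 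Billingsley's moment criterion (\citep[Thm 10.2]{Bil99} together with \lemref{bil}, exactly as in \lemref{rcomp}) then yields tightness, hence relative compactness.

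\textbf{Finite-dimensional laws.} Here I would follow \lemref{fd} step by step. First fix block and gap lengths $p_n,q_n$ satisfying
\begin{align}
\frac{1}{\alpha_n^2}\ll q_n\ll p_n\ll\frac{\sqrt n}{\alpha_n},\qquad q_n\alpha_n^2\gg\log n ;
\end{align}
the hypothesis $n\alpha_n^2\gg n^\omega$ is precisely what leaves a (polynomial) window in which such $p_n,q_n$ fit. Set $\xi_{j,n}=\tfrac{\alpha_n}{\sqrt n}\sum_i U_{i,n}$, the sum over the $j$-th block of $p_n$ consecutive indices, consecutive blocks being $q_n$ apart, and $S_n^*=\sum_{j=1}^{k_n}\xi_{j,n}$ with $k_n=\lfloor n/(p_n+q_n)\rfloor$. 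Then: \emph{(i)} an $L^2$ estimate as in \lemref{l2}, using \eqref{line:cov} with $\alpha=\alpha_n$ and $q_n\gg1/\alpha_n^2$, gives $\mathbb{E}\,\|\tfrac{\alpha_n}{\sqrt n}X^n_1-S_n^*\|^2\to0$, so it is enough to treat $S_n^*$; \emph{(ii)} in \prpref{depclt}, the mixing term is controlled exactly as in \lemref{fd} via \prpref{cov} and \lemref{tv}, $T_n(t)\le 4k_n\,\tv(\nu_{q_n},\nu)\le 4k_n\tfrac{A}{\alpha_n}(\sinc\alpha_n)^{q_n}\to0$ because $q_n\alpha_n^2\gg\log n$, while the Lyapunov term satisfies $A_n(\delta)\le k_n(\alpha_n p_n/\sqrt n)^{\delta}\,\mathbb{E}\|\xi_{1,n}\|^2=O\big((\alpha_n p_n/\sqrt n)^{\delta}\big)\to0$ for suitable $\delta\in(0,1]$ (using the deterministic bound $\|\xi_{j,n}\|\le\alpha_np_n/\sqrt n$ and $\mathbb{E}\|\xi_{1,n}\|^2=O(p_n/n)$, which comes from \eqref{line:cov} and $p_n\alpha_n^2\to\infty$); \emph{(iii)} the covariance $\Gamma_n=k_n\cov{\xi_{1,n}}=\tfrac{k_n\alpha_n^2}{n}\sum_{1\le i,j\le p_n}\cov{U_{i,n},U_{j,n}}$ equals, by the manipulation behind \eqref{line:dvpt}, $\tfrac{k_n\alpha_n^2}{2n}\big(p_n+2\sum_{d=1}^{p_n-1}(p_n-d)(\sinc\alpha_n)^d\big)\Id_2$, and summing the geometric series while using $p_n\alpha_n^2\to\infty$ (so $(\sinc\alpha_n)^{p_n}\to0$ and the leading term is $2p_n\sinc\alpha_n/(1-\sinc\alpha_n)$), $1-\sinc\alpha_n\sim\alpha_n^2/6$, and $k_np_n\sim n$, one finds that $\Gamma_n$ tends to a constant multiple of $\Id_2$ --- the constant being exactly the one in the statement. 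Joint convergence over $0\le t_1\le\dots\le t_k\le1$ follows from the decoupling estimate at the end of \lemref{fd}, now bounded by $O\big((\sinc\alpha_n)^{q_n}\big)\to0$. Combined with the tightness above, this gives $\tfrac{\alpha_n}{\sqrt n}X^n\weak\sqrt{3}\,B$ in $\cC_2$.

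\textbf{The main obstacle} is step (iii). Unlike in \thmref{const}, both the block length $p_n$ and the angle window $\alpha_n$ now move, and one has to verify that the within-block correlations have already ``saturated'' --- this is the role of $p_n\alpha_n^2\to\infty$, which is compatible with $p_n\ll\sqrt n/\alpha_n$ only thanks to $n\alpha_n^2\gg n^\omega$ --- after which the limiting constant is produced by $1-\sinc\alpha_n\sim\alpha_n^2/6$; everything else (tightness, the mixing bound, the $L^2$ replacement) is a rerun of \secref{const}. A cleaner route that I would also mention avoids slices entirely: the discrete Doob decomposition $e^{iS_{j,n}}=1+N^n_j-(1-\sinc\alpha_n)\sum_{l<j}e^{iS_{l,n}}$, with $N^n$ a $\mathbb{C}$-valued martingale whose increments are bounded by $2$, exhibits $\tfrac{\alpha_n}{\sqrt n}X^n_t$ as $\tfrac{\alpha_n}{\sqrt n(1-\sinc\alpha_n)}N^n_{\nt}$ up to a remainder that is $o(1)$ uniformly in $t$; a martingale functional CLT then applies once one shows, using \lemref{tv}, that the angle walk equidistributes over the circle, so that the rescaled conditional quadratic covariation of $N^n$ converges to a deterministic multiple of $t\,\Id_2$.
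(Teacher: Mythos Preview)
Your proposal is correct and follows essentially the same route as the paper: the same slicing scheme with blocks of length $p_n$ separated by gaps $q_n$, the same $L^2$ replacement (\lemref{l2} with $\alpha$ replaced by $\alpha_n$), the same verification of the three conditions in \prpref{depclt} via \prpref{cov}--\lemref{tv} and the covariance computation behind \eqref{line:dvpt}, and the same tightness argument via the fourth-moment bound of \lemref{rcomp} combined with \lemref{bil}. The only notable difference is that your Lyapunov estimate is slightly sharper --- you bound $\|\xi_{j,n}\|^{2+\delta}\le(\alpha_np_n/\sqrt n)^\delta\|\xi_{j,n}\|^2$ and then use $\sum_j\mathbb{E}\|\xi_{j,n}\|^2=\mathrm{tr}(\Gamma_n)=O(1)$, whereas the paper bounds $\|\xi_{j,n}\|^{2+\delta}\le(\alpha_np_n/\sqrt n)^{2+\delta}$ throughout --- which gives you the cleaner constraint $p_n\ll\sqrt n/\alpha_n$ in place of the paper's $\alpha_n^{2+\delta}p_n^{1+\delta}\ll n^{\delta/2}$; both windows are nonempty under $n\alpha_n^2\gg n^\omega$, and the paper exhibits explicit choices $p_n=\alpha_n^{-2}(\log n)u_n^\epsilon$, $q_n=\alpha_n^{-2}(\log n)u_n^\eta$ with $u_n=n^{1-\omega/2}\alpha_n^2/\log n$. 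Your closing martingale sketch is a genuinely different (and attractive) alternative, but it is not the paper's argument.
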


\begin{figure}[ht]
\centering
\includegraphics[scale=0.4]{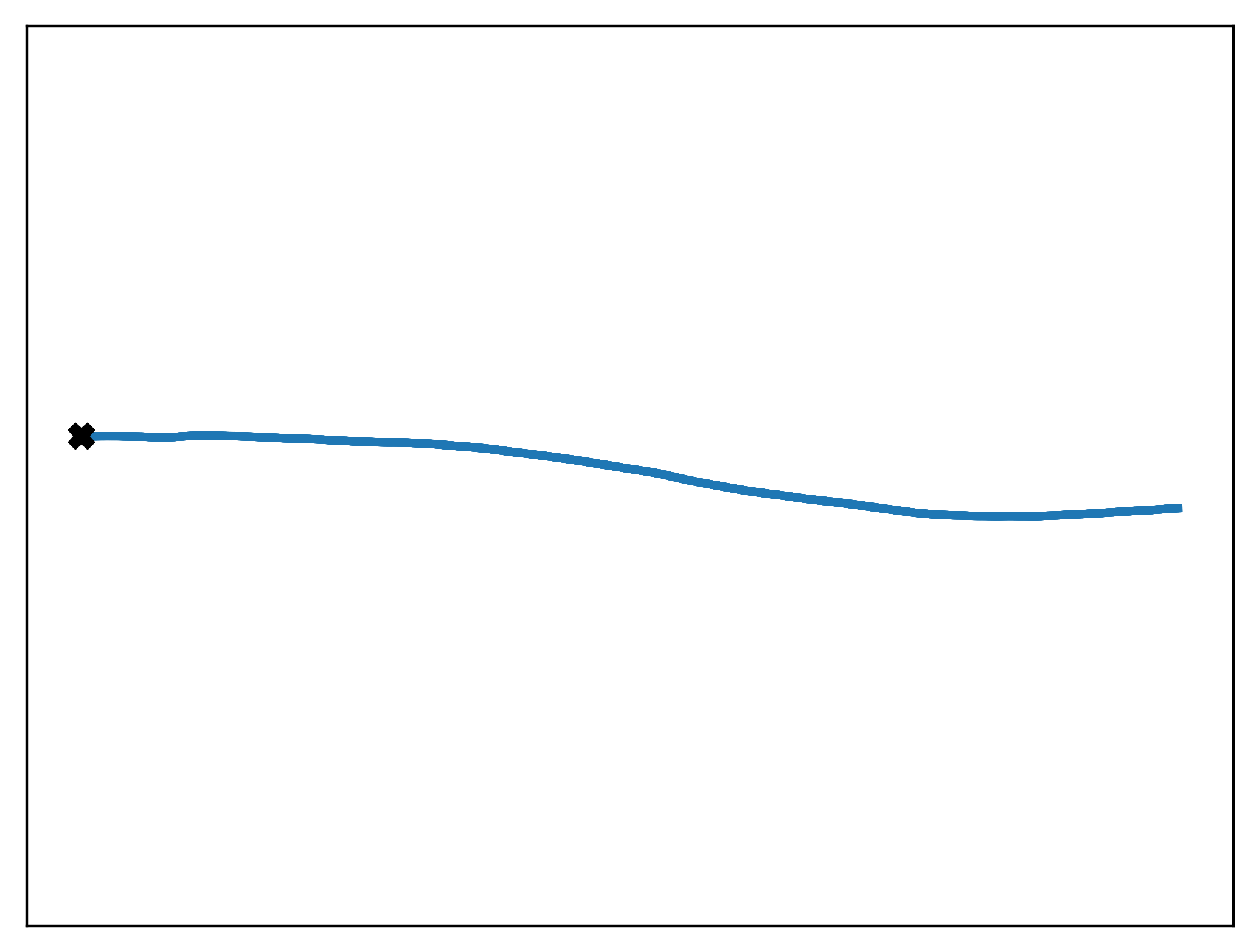}
\includegraphics[scale=0.4]{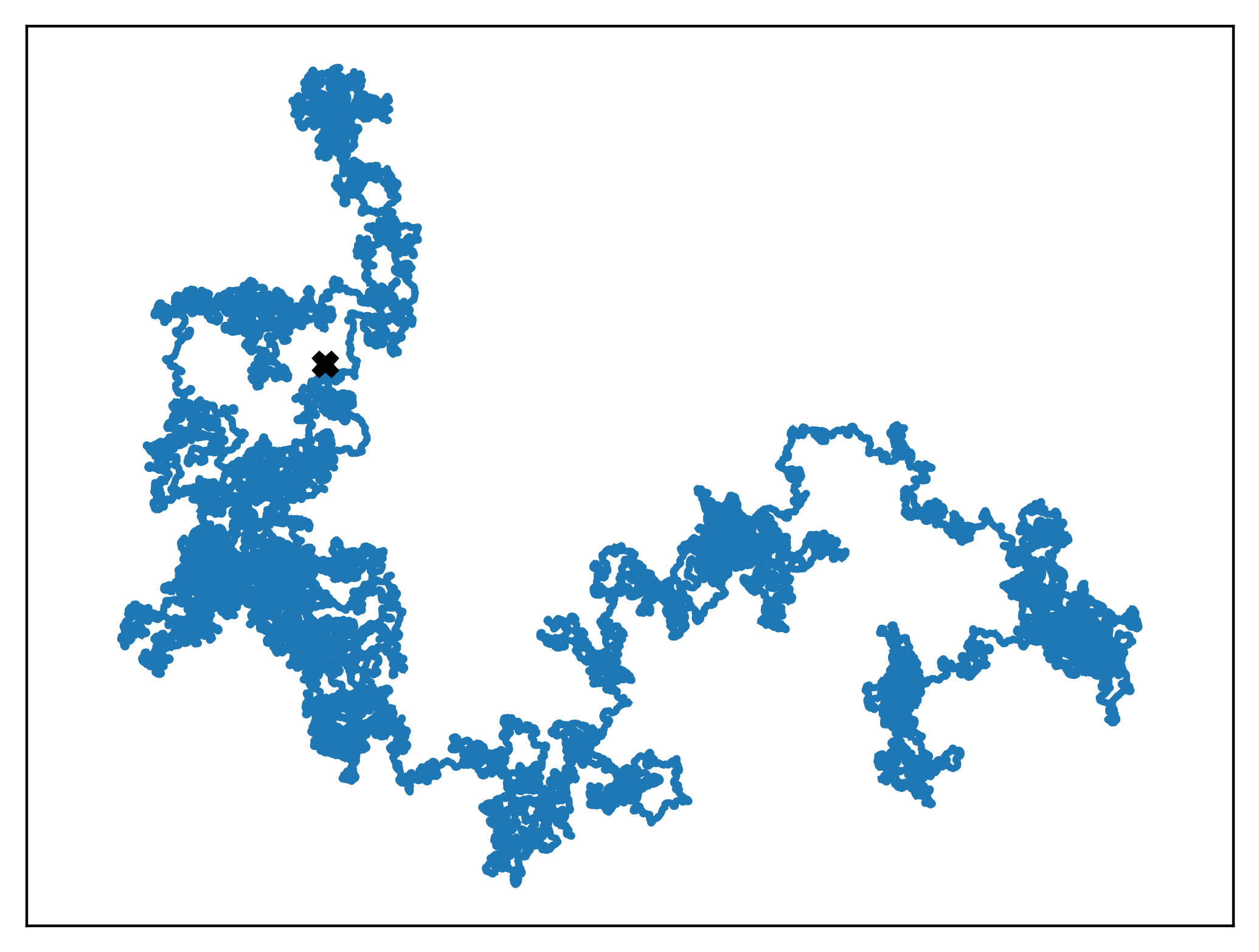}
\caption{\small A realization of the process defined in \eqref{alpha_n} for $\alpha_n = 2\pi n^{-3/4}$ (left) and $\alpha_n = 2\pi n^{-1/4}$ (right).}
\label{fig:prp3}
\end{figure}

\begin{proof}
The arguments are similar to those given in the proof \thmref{const} in \secref{const}, so that we will omit some details. 
Let $q_n \ll p_n \ll n$ be two sequences of integers with $p_n,q_n \to \infty$ and such that $p_n + q_n < n$. Let $k_n =  \lfloor n/(p_n+q_n) \rfloor$. We introduce the random variables
\begin{align}
\xi_{k,n} = \frac{\alpha_n}{\sqrt{n}} \sum_{i=(k-1)(p_n+q_n)+1}^{(k-1)(p_n+q_n)+p_n} U_{i,n},
\end{align}
and $S^*_n = \sum_{i=1}^{k_n} \xi_{i,n}$. We set $S_n = \frac{\alpha_n}{\sqrt{n}}X^n_1$. Mimicking the proof of \lemref{l2}, and using again \prpref{cov} and \lemref{tv}, we get
\begin{align}
\expect{\|S_n - S_n^*\|^2} &= O\left(\frac{k_n q_n}{n} + \frac{p_n}{n}+ \frac{q_n^2}{n\alpha_n} \sum_{k=1}^{k_n} (k_n - k)(\sinc\alpha_n)^{k p_n}  \right) \\
&= O\left( \frac{q_n}{p_n} +\frac{p_n}{n}+ \frac{q_n^2 k_n}{n \alpha_n} \frac{(\sinc\alpha_n)^{p_n} }{1-(\sinc\alpha_n)^{p_n} }\right).
\end{align}
If $p_n \alpha^2_n \gg \log n$, then $(\sinc\alpha_n)^{p_n} = \exp(- p_n \alpha^2_n / 6 + o(p_n \alpha^2_n)) \rightarrow 0$ and thus  
\begin{align}
\expect{\|S_n - S_n^*\|^2} = O\left( \frac{q_n}{p_n}+ \frac{p_n}{n} +  n^{2-\omega} (\sinc\alpha_n)^{p_n}\right) \to 0.
\end{align}
We now investigate the control of the three quantities underlying the conditions necessary for \prpref{depclt} to apply. 
For the first condition, for any $\delta \in (0,1]$, we have
\begin{align}
\sum_{i=1}^{k_n}\|\xi_{i,n}\|^{2+\delta} 
\leq k_n (p_n \alpha_n/\sqrt{n})^{2+\delta} 
\leq \alpha_n^{2+\delta} p_n^{1+\delta}/n^{\delta/2},
\end{align}
using the triangle inequality and the fact that $U_{j,n} \in \bbS^1$.  This implies that $A_n(\delta) \to 0$ as soon as the RHS converges to 0.

For the third condition, for $t \in \bbR^2$, we have, according to \prpref{cov} and \lemref{tv}, for any $n$ large enough so that $\sinc \alpha_n \geq 2/\pi$,
\begin{align}
T_n(t) \leq 4k_n  \frac{A}{\alpha_n} (\sinc\alpha_n)^{q_n} = O\left(n^{2-\omega} (\sinc\alpha_n)^{q_n} \right) .
\end{align}
Thereby, $T_n(t) \to 0$ as soon as $q_n \alpha^2_n \gg \log n$. 

For the second condition, using the same development as in the proof of \prpref{easy}, we find 
\begin{align}
\Gamma_n &=  \frac{\alpha_n^2 k_n}{2n} \left\{p_n + 2 (\sinc \alpha_n) \frac{p_n (1-\sinc\alpha_n) + (\sinc\alpha_n)^{p_n} - 1}{(1-\sinc \alpha_n)^2} \right\} \Id_2,  
\end{align}
and in particular, if $p_n \alpha^2_n \to \infty$, 
\begin{align}
\Gamma_n &=  \left\{O\left(\alpha_n^2\right) +o(1) + \frac{3k_n p_n}{n} \right\}\Id_2 \to 3\Id_2.
\end{align}

Thus, if we can find two sequences, $p_n$ and $q_n$, verifying all the conditions above, we can then apply \prpref{depclt} and, in the same fashion as in the proof of \lemref{fd}, then show that the finite-dimensional laws of $\frac{\alpha_n}{\sqrt{n}} X^n$ converge weakly to the appropriate limit. 

It only remains to find two such sequences. The conditions are, in order of appearance: 
$q_n \ll p_n \ll n$ ; 
$\log n \ll p_n \alpha^2_n$ ;
and $\alpha_n^{2+\delta} p_n^{1+\delta} \ll n^{\delta/2}$ for some $\delta \in (0,1]$ ; and
$\log n \ll q_n \alpha^2_n$.
Denoting $u_n = n^{1-\omega/2} \alpha_n^2/\log n$, set $p_n = \alpha_n^{-2} (\log n) u_n^\epsilon$ and $q_n = \alpha_n^{-2} (\log n) u_n^\eta$ with $0<\eta<\epsilon<1$ fixed. 
The first, second and fourth conditions are immediate consequences of the fact that $u_n \to \infty$ (since $n^{1 - \omega/2} \alpha_n^2 \gg n^{\omega/2} \gg \log n$) and $\alpha_n \rightarrow 0$.
The third condition is equivalent to $u_n^{\eps (1+\delta) - \delta/2} \ll n^{\omega \delta/4}(\log n)^{-1-\delta/2}$ which is true as soon as we pick $\epsilon$ smaller than $\frac{\delta}{2(1+\delta)}$.

It remains to show that the family of laws defined by $\{\frac{\alpha_n}{\sqrt{n}} X^n\}$ are tight. 
To do this, we do as in \lemref{rcomp} and its proof, and reinstate the notation defined there. 
The inequality at \lineref{ineqtens} applies in the same way, although with $\alpha$ replaced here by $\alpha_n$, and thus
\begin{align}
\limsup_{n\in\mathbb{N}} \lambda^2 \P\left(\max_{k\leq n} \|S_k\| \geq \lambda \sqrt{n}/\alpha_n \right) \leq \limsup_{n\in\mathbb{N}}\frac{\alpha_n^4 K}{(1-\sinc \alpha_n)^2\lambda^2} = \frac{6K}{\lambda^2} \xrightarrow[\lambda \to \infty]{} 0,
\end{align}
which implies relative compactness of the sequence of law by \lemref{bil}.
\end{proof}

\begin{rem}
We conjecture that the conditions of \thmref{brown} can be weakened to a mere divergence, $n \alpha_n^2 \to \infty$, although our proof technique does not seem capable to confirm this conjecture.
\end{rem}

So far, our constructions have only yielded a (scaled) Brownian motion, or trivial limits.  However, in the critical regime where $n \alpha^2_n$ converges to a positive real, the limit process is something else, and in particular is strictly smoother than the Brownian motion itself.

\begin{thm} \label{thm:c1}
Consider a sequence of angles as in \eqref{alpha_n}.
If $n \alpha_n^2 \to \kappa > 0$, then
\begin{align}
	\frac{1}{n} X^n_t \weak U\int_0^t \exp\left\{i\frac{2}{3}\kappa B^{(1)}_s\right\}ds,
\end{align}
where $U$ and $B^{(1)}$ are independent, with $U$ uniform over $\bbS^1$ and $B^{(1)}$ a standard $1$-dimensional Brownian motion.
\end{thm}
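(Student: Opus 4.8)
The plan is to reuse the slicing-and-dependent-CLT machinery developed for \thmref{const} and \thmref{brown}, but now tracking the \emph{angular process} rather than just the displacement. Write $U_{j,n} = U_{1,n}\exp(i\Psi_{j,n})$ where $\Psi_{j,n} = \sum_{k=2}^{j}\Theta_{k,n}$ is a random walk with iid increments uniform on $[-\alpha_n,\alpha_n]$; each increment has mean $0$ and variance $\alpha_n^2/3$, so $\Psi_{\lfloor ns\rfloor,n}$, being a rescaled random walk with $n\alpha_n^2/3 \to \kappa/3$ total variance over $[0,1]$, converges weakly in $\cC_1 = C([0,1],\bbR)$ to $\sqrt{\kappa/3}\,B^{(1)}_s = \tfrac{1}{\sqrt{3}}\sqrt{\kappa}\,B^{(1)}_s$ by Donsker's theorem. (One should double-check the constant: the claimed limit has $\tfrac{2}{3}\kappa$ inside the exponential, so I expect the bookkeeping to produce $\Psi_{\lfloor ns\rfloor,n}\weak \sqrt{\kappa/3}\,B^{(1)}_s$ and then a factor of $2$ somewhere — possibly from $\mathrm{Var}(\Theta)=\alpha_n^2/3$ combined with how the exponential's real part enters — which I would pin down in the computation rather than here.) Since $U_{1,n}$ is independent of $\{\Theta_{k,n}\}$ and uniform on $\bbS^1$, the pair $(U_{1,n},\Psi_{\lfloor n\cdot\rfloor,n})$ converges jointly to $(U, \sqrt{\kappa/3}\,B^{(1)})$ with $U$ and $B^{(1)}$ independent.

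Next I would write, for each $t$,
\begin{align}
\frac{1}{n}X^n_t = \frac{1}{n}\sum_{j=1}^{\lfloor nt\rfloor} U_{1,n}\exp(i\Psi_{j,n}) + O(1/n)
= U_{1,n}\int_0^t \exp\!\big(i\Psi_{\lfloor ns\rfloor,n}\big)\,ds + O(1/n),
\end{align}
recognizing the sum as a Riemann sum of the piecewise-constant integrand $s\mapsto \exp(i\Psi_{\lfloor ns\rfloor,n})$. The map $\cC_1 \to \cC_2$ sending $w \mapsto \big(t\mapsto \int_0^t e^{iw(s)}\,ds\big)$ is continuous for the uniform topologies (indeed Lipschitz: $|\int_0^t e^{iw} - \int_0^t e^{iw'}| \le \|w-w'\|_\infty$), so by the continuous mapping theorem applied to the weakly convergent sequence $\Psi_{\lfloor n\cdot\rfloor,n}$, together with the joint convergence with $U_{1,n}$ and Slutsky-type arguments to absorb the $O(1/n)$ error, one gets $\frac1n X^n \weak U\int_0^\cdot \exp(i\sqrt{\kappa/3}\,B^{(1)}_s)\,ds$ in $\cC_2$. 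Combined with the constant check above this should be exactly the claimed limit; relative compactness is already guaranteed by \corref{lip}, so the convergence is genuinely in $\cC_2$ and not merely in finite-dimensional distributions.

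The main obstacle is making the Riemann-sum approximation rigorous \emph{uniformly in $t$} and in the right mode of convergence. The naive bound on $\big|\frac1n\sum_{j\le nt}e^{i\Psi_{j,n}} - \int_0^t e^{i\Psi_{\lfloor ns\rfloor,n}}ds\big|$ is $O(1/n)$ deterministically since both are built from the same step function, so that part is actually clean; the real subtlety is justifying that the functional $w\mapsto \int_0^\cdot e^{iw(s)}ds$ composed with $\Psi_{\lfloor n\cdot\rfloor,n}$ converges \emph{together with} the initial direction $U_{1,n}$ — i.e., establishing joint weak convergence of $(U_{1,n}, \Psi_{\lfloor n\cdot\rfloor,n})$ in $\bbS^1\times\cC_1$ — and then invoking the continuous mapping theorem on the product space. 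Since $U_{1,n}$ is literally independent of the angle walk for every $n$, this joint convergence follows from marginal convergence of each coordinate plus independence (the limit law is the product law), so the argument closes. A secondary point requiring care: confirming that $\Psi_{\lfloor n\cdot\rfloor,n}$ satisfies the hypotheses of Donsker's theorem here — the increments are a triangular array (their law depends on $n$ through $\alpha_n$), so one technically needs the Lindeberg–Feller functional CLT rather than the classical Donsker statement, but the increments are bounded by $\alpha_n\to 0$ so Lindeberg's condition is trivial.
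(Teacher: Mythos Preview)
Your argument is essentially the paper's own: it too introduces the angular process (linearly interpolated) $\Phi^n_t = \sum_{i=2}^{\lfloor nt\rfloor}\Theta_{i,n} + (nt-\lfloor nt\rfloor)\Theta_{\lfloor nt\rfloor+1,n}$, shows $\Phi^n \Rightarrow cB^{(1)}$ in $\cC_1$ via a triangular-array CLT, writes $\frac1n X^n = U_{1,n}\,f_n\Phi^n$ for the Riemann-sum map $f_n$, bounds $\|f_n\Phi^n - f\Phi^n\|_\infty \le \alpha_n$ directly, and concludes by continuous mapping (the opening remark about reusing the slicing/dependent-CLT machinery is a red herring --- neither you nor the paper uses it here, since the $\Theta_{i,n}$ are independent). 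Two small notes: your piecewise-constant process $s\mapsto\Psi_{\lfloor ns\rfloor,n}$ does not actually live in $\cC_1$, so either work in Skorokhod space or, as the paper does, linearly interpolate; and your variance bookkeeping giving $\sqrt{\kappa/3}\,B^{(1)}$ is correct --- the constant $\tfrac{2}{3}\kappa$ printed in the statement appears to be a slip in the paper, so there is no missing factor of two to find.
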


\begin{figure}[ht]
\centering
\includegraphics[scale=0.4]{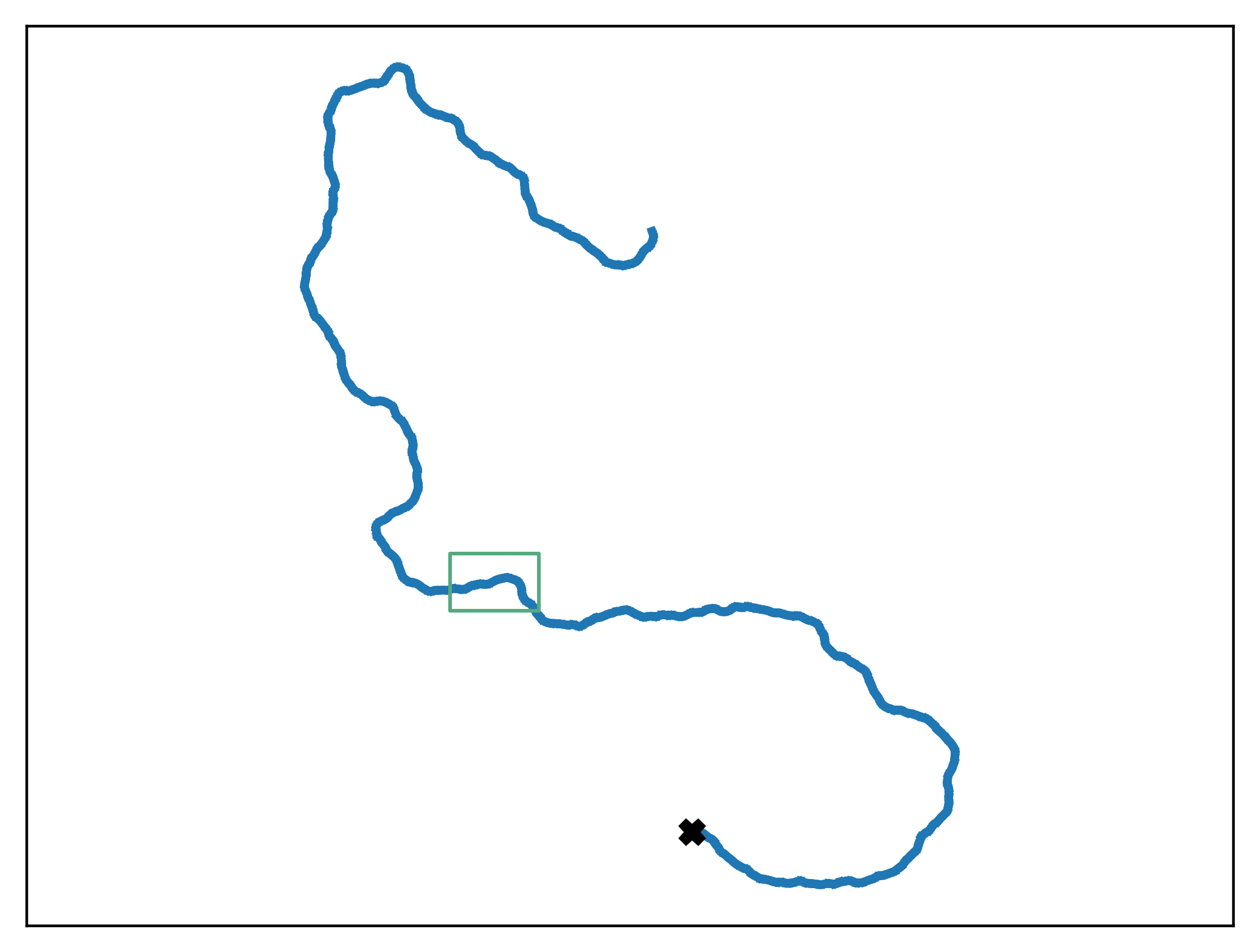}
\includegraphics[scale=0.4]{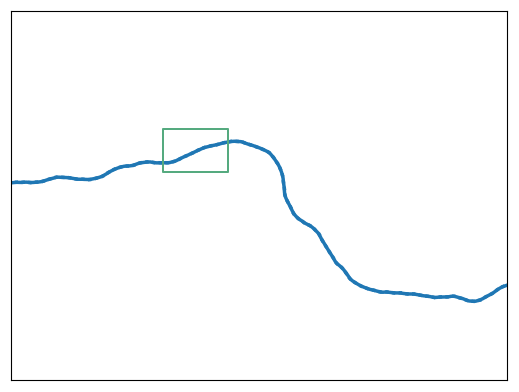}
\includegraphics[scale=0.4]{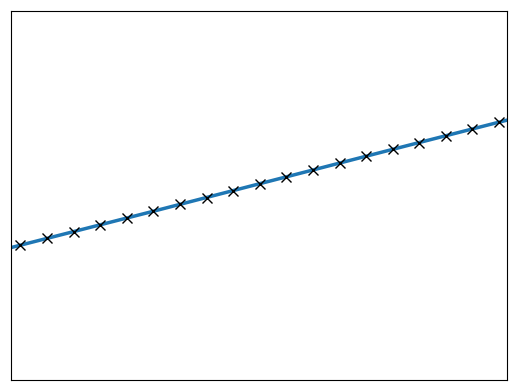}
\caption{\small A realization of the process defined in \eqref{alpha_n} for $\alpha_n = 2 \pi n^{-1/2}$ observed at different scales.}
\label{fig:c1}
\end{figure}

\begin{proof} We set $\cC_1 = C([0,1],\bbR)$, and introduce the sequence of processes 
\beq
\Phi^n_t = \sum_{i=2}^{\nt} \Theta_{i,n} + (nt - \nt) \Theta_{\nt+1,n}.
\eeq 
Since the angles variables $\Theta_{i,n}, i \ge 1,$ are iid, a simple application of the Lyapunov central limit theorem, in conjunction with the use of \citep[Lem on p.88]{Bil99} and of the Etemadi inequality \citep[Pro M19 on p.266]{Bil99}, immediately show that $\Phi^n_t \weak \Phi_t = \frac{2}{3} \kappa B^{(1)}_t$ in the space $\cC_1$. 

Set 
\begin{align} 
&f_n : x \in \cC_1 \mapsto \left( t \mapsto \frac{1}{n}\left\{\sum_{k=1}^{\nt} e^{ix(k/n)} + (nt-\nt)e^{ix((\nt+1)/n)} \right\} \right) \in \cC_2, \\
\text{and}\quad &f : x \in \cC_1 \mapsto \left(t \mapsto \int_{0}^t e^{ix(s)}ds \right) \in \cC_2.
\end{align}
These two maps are continuous from $\cC_1$ to $\cC_2$ for the uniform topology --- they are even $1$-Lipschitz for the supnorm. Furthermore, we notice that $\frac{1}{n}X^n = U_{1,n} f_n\Phi^n$, with $U_{1,n}$ being independent from $f_n \Phi^n$. Since $f$ is continuous, we immediately have that  $f\Phi^n \weak f\Phi$ in the space $\cC_2$. 

Take a test function $g : \cC_2 \rightarrow \mathbb{R}$ that is both bounded and Lipschitz\footnote{  Because $\cC_2$ is a polish space, the bounded-Lipschitz distance metrizes the weak convergence of probability measures \citep[Thm 11.3.3]{dudley2018real}.}, and denote by $\Lip g$ its Lipschitz constant.
We have
\begin{align}
\left| \expect{g(f_n\Phi^n)} - \expect{g(f\Phi)} \right| &\leq \left| \expect{g(f_n\Phi^n)} - \expect{g(f\Phi^n)} \right|+\left| \expect{g(f\Phi^n)} - \expect{g(f\Phi)} \right| \\
&\leq \Lip(g)\, \expect{\|f_n\Phi^n - f\Phi^n \|_\infty}+ o(1).
\end{align}
The second term is indeed $o(1)$ because $f\Phi^n$ converges weakly to $f\Phi$. With an analogous reasoning as the one underlying \lemref{lip}, we see that for any $s,t \in [0,1]$, $|\Phi^n_t - \Phi^n_s| \leq n\alpha_n |t-s|$, and thus, for any $t\in[0,1]$,
\begin{align}
\left|f \Phi^n[t] - f_n\Phi^n[t]\right| &\leq \sum_{k=1}^{\nt} \int_{\frac{k-1}{n}}^{\frac{k}{n}} |e^{i\Phi^n_{s}} - e^{i\Phi^n_{k/n}}|ds +  \int_{\frac{\nt}{n}}^t  |e^{i\Phi^n_{s}} - e^{i\Phi^n_{(\nt+1)/n}}|ds \label{bline}\\
&\leq \sum_{k=1}^{\nt} \int_{\frac{k-1}{n}}^{\frac{k}{n}} |\Phi^n_{s} - \Phi^n_{k/n}|ds +  \int_{\frac{\nt}{n}}^t  |\Phi^n_{s} - \Phi^n_{(\nt+1)/n}|ds\\
&\leq \sum_{k=1}^{\nt} \frac{1}{n} (n\alpha_n) \frac{1}{n} + \frac{nt-\nt}{n} (n\alpha_n)\frac{1}{n} \leq t\alpha_n. \label{eline}
\end{align}
Hence, $\|f_n\Phi^n - f\Phi^n \|_\infty  \leq \alpha_n \to 0$.
We may thus conclude that $\expect{g(f_n\Phi^n)} \to \expect{g(f\Phi)}$, and so for any $g$ bounded-Lipschitz, thus implying that $f_n\Phi^n$ converges weakly to $f\Phi$ in $\cC_2$.
\end{proof}

The limit process in \thmref{c1} is $(3/2-\delta)$-H\"older continuous for any $\delta > 0$.  In particular, it is continuously differentiable, unit-speed, and if we denote it by $X$, its velocity at time $t$ is given by 
\begin{align}
	\dot X_t = U \exp\left\{i\frac{2}{3}\kappa B^{(1)}_t\right\}.
\end{align}

\section{Construction based on a Markov sequence of angles}
\label{sec:markov}
The limit process derived for the construction studied in \thmref{c1} is not twice differentiable.  Our goal in this section is to construct a random walk with limiting process having finite curvature, which from a geometric standpoint is appealing.  Given our investigations in the previous two sections, such a construction appears to require some memory in the angle processes.  It turns out that just a little memory is sufficient.

Let $\Theta_{2,n}$ be uniform on $[-\alpha_n,\alpha_n]$, $j\geq 2$, define $\Theta_{j+1,n} = \Theta_{j,n} + \delta_{j+1,n}$, where the increment $\delta_{j+1,n}$ is independent of the previous angles, namely $\Theta_{k,n}, k\leq j$.  See \figref{walk2} for an illustration of this definition.

\begin{figure}[ht]
\centering
\includegraphics[scale=0.15]{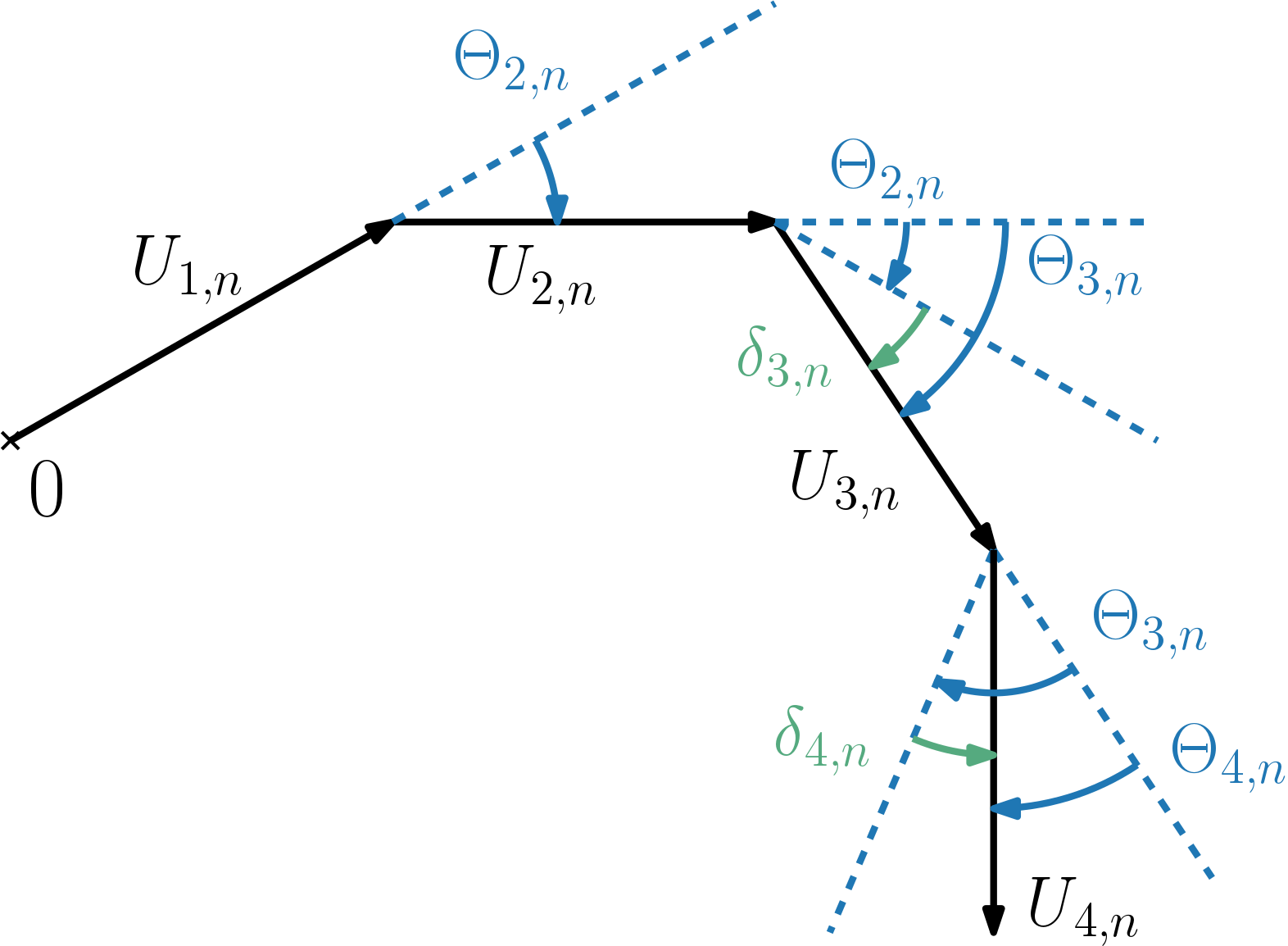}
\caption{\small The first steps of the random walk with a Markov sequence of angles. Because the angles keep track of their former values, we can expect a smoother process at the limit.} 
\label{fig:walk2}
\end{figure}

\begin{thm} \label{thm:c2}
If the increments $\delta_{j,n}, j \ge 1,$ are iid uniform on the segment $[-\alpha_n,\alpha_n]$, with $n^3 \alpha_n^2 \to \kappa > 0$, then 
\begin{align}
	\frac{1}{n} X^n_t \weak U \int_0^t \exp\left\{i\frac{2}{3}\kappa \int_0^s B^{(1)}_u du\right\}ds.
\end{align}
\end{thm}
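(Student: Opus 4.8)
The plan is to mirror the proof of \thmref{c1}, promoting the one-layer averaging $\Phi^n$ to a two-layer averaging: in \thmref{c1} the angle process was a random walk (converging to Brownian motion after scaling), whereas now the angle process $\Theta_{j,n}$ is itself an \emph{integrated} random walk, so it should converge, after the correct scaling, to an integrated Brownian motion $\int_0^s B^{(1)}_u\,du$. Concretely, write $\Theta_{j,n} = \Theta_{2,n} + \sum_{k=3}^j \delta_{k,n}$ and introduce the two interpolated processes
\begin{align}
D^n_t = \sum_{k=3}^{\nt}\delta_{k,n} + (nt-\nt)\delta_{\nt+1,n}, \qquad
\Phi^n_t = \sum_{i=2}^{\nt}\Theta_{i,n} + (nt-\nt)\Theta_{\nt+1,n}.
\end{align}
The first step is to establish, in $\cC_1$, the joint/iterated weak limit $\Phi^n \weak \Phi$ with $\Phi_t = \tfrac{2}{3}\kappa\int_0^t B^{(1)}_u\,du$. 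For this I would first note that $n^{-1/2}\alpha_n^{-1} D^n$ converges weakly in $\cC_1$ to a constant multiple of $B^{(1)}$ by Donsker's theorem applied to the i.i.d.\ increments $\delta_{k,n}/\alpha_n$ (which have variance $\tfrac13$, after rescaling), using the same Etemadi-inequality tightness argument cited in the proof of \thmref{c1}. Then $\Phi^n_t$ is, up to a negligible boundary term of order $\alpha_n$ coming from $\Theta_{2,n}$ and the fractional-part corrections, a Riemann-sum approximation of $n \int_0^t D^n_s\,ds$ times the appropriate scalar; tracking the scaling $n^3\alpha_n^2\to\kappa$ gives exactly the factor $\tfrac23\kappa$ (the $2/3 = 2 \cdot \tfrac13 \cdot$ — more precisely the $1/6$ from $1-\sinc$ expansions is replaced here by the variance bookkeeping of the double sum, yielding $\tfrac23\kappa$ as in \thmref{c1}). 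Since $x \mapsto (t\mapsto \int_0^t x(s)\,ds)$ is a continuous (indeed $1$-Lipschitz) map $\cC_1 \to \cC_1$, the continuous mapping theorem transfers the limit of $n^{-1/2}\alpha_n^{-1}D^n$ to the claimed limit of $\Phi^n$.

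The second step is identical in structure to \thmref{c1}: define $f_n, f : \cC_1 \to \cC_2$ by
\begin{align}
f_n : x \mapsto \Big(t \mapsto \tfrac1n\big\{\textstyle\sum_{k=1}^{\nt} e^{ix(k/n)} + (nt-\nt)e^{ix((\nt+1)/n)}\big\}\Big), \qquad
f : x \mapsto \Big(t \mapsto \textstyle\int_0^t e^{ix(s)}\,ds\Big),
\end{align}
which are $1$-Lipschitz for the sup-norm, observe that $\tfrac1n X^n = U_{1,n}\, f_n\Phi^n$ with $U_{1,n}$ independent of $f_n\Phi^n$, and argue as before: $f\Phi^n \weak f\Phi$ by the continuous mapping theorem, while $\|f_n\Phi^n - f\Phi^n\|_\infty \to 0$ because $|\Phi^n_t - \Phi^n_s| \le n\max_j|\Theta_{j,n}|\,|t-s|$ and $\max_{j\le n}|\Theta_{j,n}| \le (n-1)\alpha_n = o(1)$ almost surely is \emph{not} quite true --- here is the one genuine difference. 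Indeed, in \thmref{c1} the Lipschitz modulus of $\Phi^n$ was bounded deterministically by $n\alpha_n \to 0$, whereas now $\Theta_{j,n}$ is a random walk of length up to $n$ with step size $\alpha_n \asymp n^{-3/2}$, so $\max_{j\le n}|\Theta_{j,n}|$ is of order $\sqrt{n}\,\alpha_n \asymp n^{-1}$ \emph{in probability}, not deterministically. One therefore replaces the deterministic bound by a high-probability bound: by Etemadi's (or Kolmogorov's) maximal inequality, $\Pr(\max_{j\le n}|\Theta_{j,n}| > \varepsilon) \le \varepsilon^{-2}\,\mathrm{Var}(\Theta_{n,n}) = O(\varepsilon^{-2} n\alpha_n^2) = O(\varepsilon^{-2} n^{-2}) \to 0$, so on an event of probability $1-o(1)$ one has $\|f_n\Phi^n - f\Phi^n\|_\infty \le \max_j|\Theta_{j,n}| = o(1)$, and the contribution of the complementary event is absorbed using boundedness of the test function $g$. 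Chaining the bounded-Lipschitz estimates exactly as in \thmref{c1} then yields $\expect{g(f_n\Phi^n)} \to \expect{g(f\Phi)}$ for every bounded-Lipschitz $g$, hence $f_n\Phi^n \weak f\Phi$ in $\cC_2$, and multiplying by the independent uniform $U = U_{1,n}$ (whose law does not depend on $n$) gives $\tfrac1n X^n \weak U\int_0^t \exp\{i\tfrac23\kappa\int_0^s B^{(1)}_u\,du\}\,ds$. Relative compactness of $\{\tfrac1n X^n\}$ is already guaranteed by \corref{lip}, so no separate tightness argument is needed for $X^n$ itself.

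The main obstacle I anticipate is the first step — pinning down the exact constant $\tfrac23\kappa$ in the limit of $\Phi^n$ and handling the double-summation bookkeeping cleanly. One must be careful that $\Phi^n_t = \sum_{i=2}^{\nt}\big(\Theta_{2,n} + \sum_{k=3}^{i}\delta_{k,n}\big) + (\text{boundary})$, so the dominant term is $\sum_{i=2}^{\nt}\sum_{k=3}^{i}\delta_{k,n} = \sum_{k=3}^{\nt}(\nt - k + 1)\delta_{k,n}$, a \emph{triangular-array weighted} sum of i.i.d.\ variables; its fluctuations are governed by $\sum_k (\nt-k)^2 \asymp n^3/3$, and combined with $\mathrm{Var}(\delta_{k,n}) = \alpha_n^2/3$ and the normalization $1/n$ this produces variance $\asymp n^3\alpha_n^2/(9 n^2) \to \kappa/9$ per unit... — the precise constant must be reconciled against the independent computation via $\int_0^s B^{(1)}_u du$, whose scaling should be cross-checked against the $\sinc$-expansion route used implicitly in \thmref{c1}. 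The cleanest rigorous route is probably to avoid ad hoc constants entirely: prove $n^{-1/2}\alpha_n^{-1}D^n \weak c\,B^{(1)}$ with $c$ determined by Donsker ($c^2 = 1/3$, the variance of a uniform on $[-1,1]$), then $\Phi^n \weak \tfrac23\kappa \int_0^\cdot B^{(1)}_u du$ follows by the continuous linear map $x \mapsto n\int_0^\cdot x$, reading off $\tfrac23\kappa$ from the identity $n \cdot n^{1/2}\alpha_n \cdot \tfrac{1}{n}$ combined with $n^{3/2}\alpha_n \to \sqrt\kappa$ and the $1/3 \leftrightarrow 2/3$ conversion already validated in \thmref{c1}. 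Everything else is a faithful repetition of the \thmref{c1} argument, with the single substantive upgrade being the replacement of a deterministic Lipschitz bound by an in-probability one.
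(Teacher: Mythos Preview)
Your plan is essentially the paper's own argument: the paper introduces the rescaled increment process $\Psi^n_t = n\{\sum_{i=2}^{\nt}\delta_{i,n} + (nt-\nt)\delta_{\nt+1,n}\}$, shows $\Psi^n \weak \tfrac23\kappa B^{(1)}$ by Donsker, then writes $\tfrac1n X^n = U_{1,n}\, f_n h_n \Psi^n$ where $h_n, h$ are the Riemann-sum / integral operators $\cC_1\to\cC_1$ and $f_n, f$ are as in \thmref{c1}, and closes with a three-term triangle inequality bounding $\|f_n h\Psi^n - f h\Psi^n\|_\infty$ and $\|f_n h_n\Psi^n - f_n h\Psi^n\|_\infty$ deterministically. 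Your organization differs only cosmetically: you absorb the $h_n$-to-$h$ comparison into your ``first step'' (the convergence of $\Phi^n$), whereas the paper keeps both layers explicit and handles them symmetrically; either way the same estimates appear.

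One correction: your ``genuine difference'' is not one. You write that $\max_{j\le n}|\Theta_{j,n}| \le (n-1)\alpha_n = o(1)$ is ``not quite true'' and therefore propose an in-probability maximal-inequality argument. But under the hypothesis $n^3\alpha_n^2 \to \kappa$ one has $\alpha_n \asymp n^{-3/2}$, hence $(n-1)\alpha_n \asymp n^{-1/2} \to 0$ deterministically. So the crude triangle-inequality bound $|\Theta_{j,n}| \le (j-1)\alpha_n$ already gives $\Lip(\Phi^n) \le n\max_j|\Theta_{j,n}| \le n^2\alpha_n$, and thus $\|f_n\Phi^n - f\Phi^n\|_\infty \le \tfrac1n\Lip(\Phi^n) \le n\alpha_n \to 0$ with no probability needed --- exactly as in the paper, which bounds $\tfrac1n\|\Psi^n\|_\infty$ and $\tfrac1n\Lip(\Psi^n)$ by $n\alpha_n$ deterministically. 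Your in-probability detour is harmless but unnecessary. (Your caution about the constant $\tfrac23\kappa$ is warranted and worth checking carefully; the variance bookkeeping gives $\mathrm{Var}(\Psi^n_1) \sim n^3\alpha_n^2/3 \to \kappa/3$.)
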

\begin{proof} 
The proof is similar to that of \thmref{c1}, and we reinstate the notation used there. 
We have $\Theta_{k,n} = \sum_{i=2}^k \delta_{i,n}$ (denoting $\delta_{2,n} = \Theta_{2,n}$). We then define
\begin{align}
\Psi^n_t = n\left\{\sum_{i=2}^{\nt} \delta_{i,n} + (nt-\nt) \delta_{\nt+1,n}\right\},
\end{align}
so that $\Theta_{k,n} = \frac{1}{n} \Psi^n_{k/n}$. 
As in the proof of \thmref{c1}, we have $\Psi^n_t \weak \Psi_t = \frac{2}{3}\kappa B^{(1)}_t$ in the space $\cC_1$. We introduce the functions
\begin{align}
&h_n : x \in \cC_1 \mapsto \left( t \mapsto \frac{1}{n}\left\{\sum_{k=1}^{\nt} x(k/n) + (nt-\nt) x\left(\frac{\nt+1}{n}\right) \right\} \right) \in \cC_1, \\
\text{and}\quad &h : x \in \cC_1 \mapsto \left(t \mapsto \int_{0}^t x(s)ds \right) \in \cC_1.
\end{align}
They are $1$-Lipschitz for the supnorm. Furthermore, we have \beq
\frac{1}{n}X^n = U_{1,n} f_n \Phi^n = U_{1,n} f_n h_n \Psi^n.
\eeq
As before, $U_{1,n}$ is independent from $f_n h_n \Phi^n$.  
Take a test function $g \in \BL(\cC_2)$. 
We have 
\begin{multline} \label{line:bigineq}
\left|\expect{g(f_n h_n \Psi^n)} - \expect{g(f h \Psi)} \right| 
\leq |\expect{g(f h \Psi^n)} - \expect{g(f h \Psi)}| \\
+ \left|\expect{g(f_n h \Psi^n)} - \expect{g(f h \Psi^n)} \right| \\
+ \left|\expect{g(f_n h_n \Psi^n)} - \expect{g(f_n h \Psi^n)} \right|.
\end{multline}
First term on the RHS of \lineref{bigineq} converges to $0$ because $\Phi^n \weak \Phi$. 
Second term on the RHS of \lineref{bigineq} can be bounded as follows
\begin{align}
 \left|\expect{g(f_n h \Psi^n)} - \expect{g(f h \Psi^n)} \right| 
 &\leq \Lip(g)\, \expect{\|f_n h\Psi^n - f h \Psi^n\|_\infty} \\
 &\leq \Lip(g)\, \frac{1}{n} \expect{\Lip(h\Psi^n)} \\
 &\leq \Lip(g)\, \frac{1}{n} \expect{\|\Psi^n\|_\infty} \leq \Lip(g) n\alpha_n  \longrightarrow 0, \label{lipinf}
\end{align}
where the inequality $\| f_n x - f x \|_\infty \leq \frac{1}{n} \Lip(x)$ comes from a computation similar to one done in the proof of \thmref{c1} (see lines \eqref{bline} to \eqref{eline}). 
The inequality $\Lip(h\Psi^n) \le \|\Psi^n\|_\infty$ that we use at \eqref{lipinf} comes from the definition of $h$ : for any $x \in \cC_1$ we have $|hx(t) - hx(s)| \leq \int_s^t |x| \leq \|x\|_\infty |t-s|$ for any $0 \leq s \leq t \leq 1$.
The convergence to $0$ holds because $n =O(\alpha_n^{-2/3})$. 
The last term on the RHS of \eqref{line:bigineq} is bounded as follows
\begin{align}
\left|\expect{g(f_n h_n \Psi^n)} - \expect{g(f_n h \Psi^n)} \right| &\leq \Lip(g)\, \expect{\|f_n h_n\Psi^n - f_n h \Psi^n\|_\infty} \\
&\leq \Lip(g)\, \expect{\|h_n\Psi^n - h \Psi^n\|_\infty}  \\
&\leq \Lip(g)\, \frac{1}{n} \expect{\Lip(\Psi^n)} \leq \Lip(g) n \alpha_n \to 0,
\end{align}
where we used the fact that $f_n$ is $1$-Lipschitz, and a few inequalities that we already used in the previous bounds. 

We conclude that $\frac{1}{n} X^n = U_{1,n} f_n h_n \Psi^n$ converges weakly in $\cC_2$ to $U fh\Psi$, which is exactly the convergence stated in the theorem.
\end{proof}

The limit process in \thmref{c2} is $(5/2-\delta)$-H\"older continuous, hence twice differentiable and, if we denote it by $X$, its acceleration is given by 
\begin{align}
	\ddot X_t = i\frac{2}{3}\kappa B_t^{(1)} U\exp\left\{i\frac{2}{3}\kappa \int_0^t B^{(1)}_s ds\right\}.
\end{align}
It is also unit-speed, and in particular, its unsigned curvature at time $t$ is given by $\frac{2}{3}\kappa \left| B^{(1)}_t\right|$. See \figref{c2zoom} for a realization of such a process.

\begin{figure}[ht]
\centering
\includegraphics[scale=0.4]{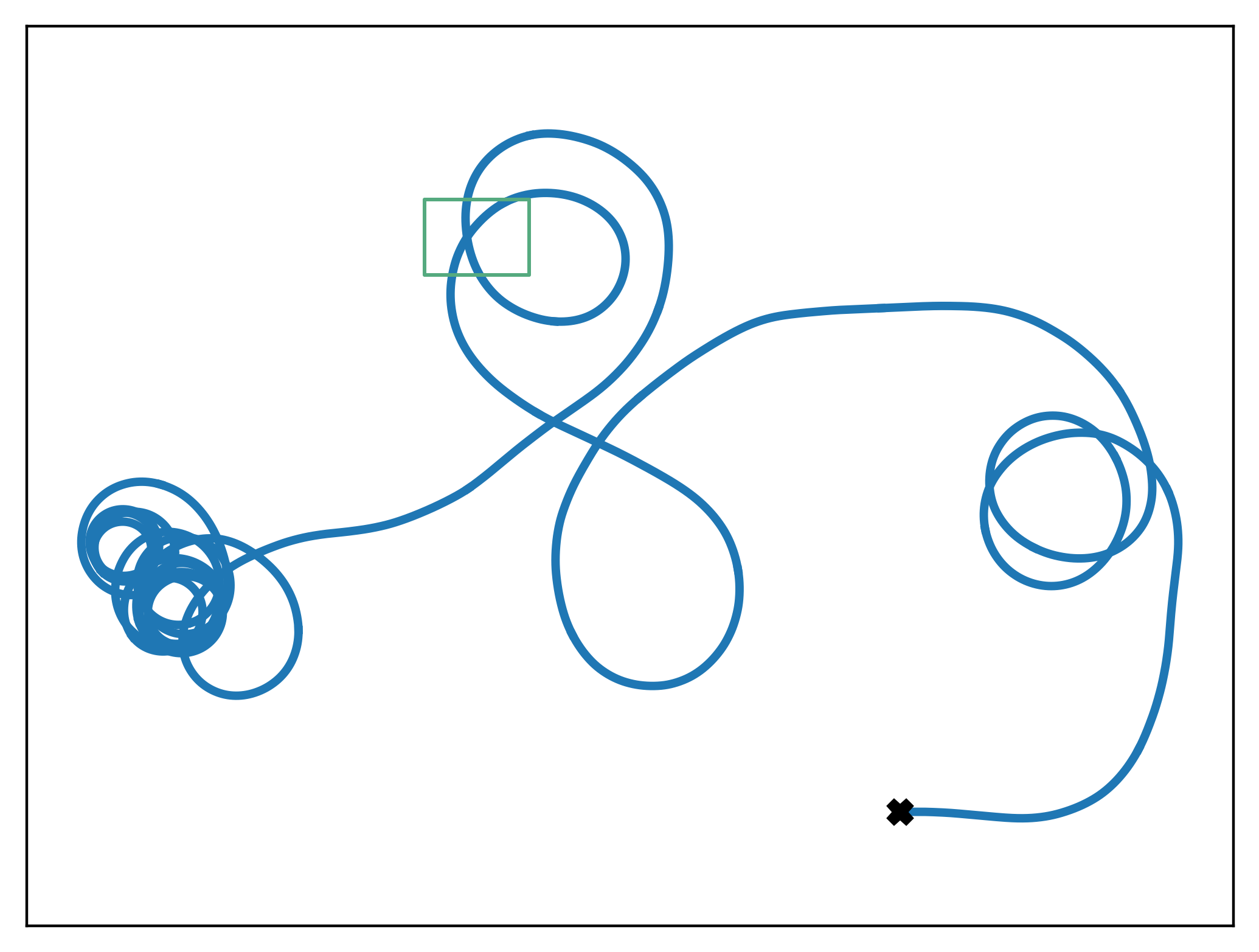}
\includegraphics[scale=0.4]{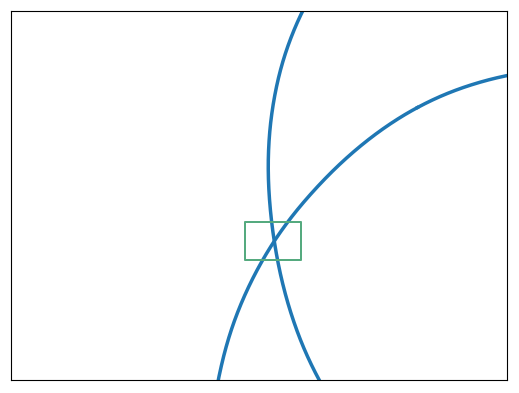}
\includegraphics[scale=0.4]{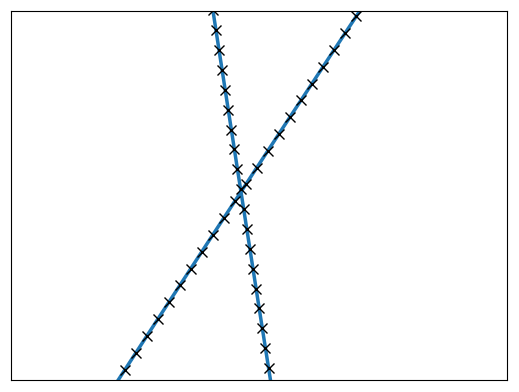}
\caption{\small A realization of the process defined in this section for $\alpha_n = 64 \pi n^{-3/2}$, observed at different scales.}
\label{fig:c2zoom}
\end{figure}

\section{Discussion}
\label{sec:discussion}

Retrospectively, our construction in \secref{const} appears naive.  Yet, that the construction failed to produce a process with curves with finite curvature was initially surprising to us due to the fact that the polygonal lines resulting from the construction do have bounded curvature (independent of $n$) in the sense of \citep{arias2017unconstrained}.  In that paper, the curvature of a polygonal line at a vertex is defined as the inverse of the circumradius of the triangle that this vertex forms with the two adjacent vertices on the polygonal line --- a rather natural definition that is shown there to enjoy good properties.  
However, as we have shown, such a construction can only yield a Brownian motion in the limit, or at best a process with once differentiable realizations if we let the angle interval shrink at a very specific rate.\medskip

Otherwise, we believe the limits established here have the sort of universality expected of random walk constructions, in that the edges defining polygonal line do not need to have the exact same length, and that the angles or their increments do not need to be selected uniformly at random.  

We also anticipate that similar constructions, with similar limits, are possible in arbitrary dimension.  The most interesting case, besides the planar case presented here, may well be that of random walks and curves in dimension three, where an analogous goal would be to construct random walks with limits that exhibit finite curvature and torsion (almost surely).

\subsection*{Acknowledgments}
We are grateful to Bruce Driver for helpful discussions.
This work was partially supported by the US National Science Foundation (DMS 1513465).

\nocite{*}
\small
\bibliographystyle{chicago}
\bibliography{ref_v2}

\end{document}